\newtheorem{proposition}{Proposition}[section]
\newtheorem{theorem}[proposition]{Theorem}
\newtheorem{corollary}[proposition]{Corollary}
\newtheorem{lemma}[proposition]{Lemma}
\newtheorem{definition}[proposition]{Definition}
\newtheorem{assumption}[proposition]{Assumption}
\newtheorem{example}[proposition]{Example}
\newcommand{\aff}{\mathrm{aff}\,}
\def\argmin{ \mathop{{\rm argmin}}}
\newcommand{\diag}{\mathrm{diag}}
\newcommand{\Diag}{\mathrm{DIAG}}
\newcommand{\dom}{\mathrm{dom}\,}
\newcommand{\epi}{\mathrm{epi}\,}
\newcommand{\inter}{\mathrm{int}\,}
\newcommand{\ri}{\mathrm{ri}\,}
\newcommand{\rge}{\mathrm{rge}\,}
\newcommand{\para}{\mathrm{par}\,}
\newcommand{\sgn}{\mathrm{sgn}}
\newcommand{\lin}{\mathrm{span}\,}
\newcommand{\p}{\partial}
\newcommand{\R}{\mathbb{R}}
\newcommand{\bS}{\mathbb{S}}
\newcommand{\rank}{\mathrm{rank}\,}
\newcommand{\rp}{\mathbb R\cup\{+\infty\}}
\newcommand{\bR}{\mathbb{R}}
\def\bS{\mathbb{S}}
\newcommand{\IFF}{\quad\Longleftrightarrow\quad}
\newcommand{\tr}{\mathrm{tr}\,}
\newcommand{\bA}{\mathbb{A}}
\newcommand{\bB}{\mathbb{B}}
\newcommand{\bE}{\mathbb{E }}
\newcommand{\ip}[2]{\left\langle #1,\, #2\right\rangle}
\newcommand{\half}{\frac{1}{2}}
\newcommand{\set}[2]{\left\{#1\,\left\vert\; #2\right.\right\}}
\newcommand{\sig}{\sigma}
\newcommand{\cA}{\mathcal{A}}
\newcommand{\cC}{\mathcal{C}}
\newcommand{\cL}{\mathcal{L}}
\newcommand{\cS}{\mathcal{S}}
\newcommand{\cT}{\mathcal{T}}
\newcommand{\cX}{\mathcal{X}}
\newcommand{\cV}{\mathcal{V}}
\newcommand{\AND}{\ \mbox{ and }\ }
\newcommand{\st}{\ \mbox{s.t.}\ }
\date{\today}
\author{Tim Hoheisel}
\address{Department of Mathematics and Statistics, McGill University, 805 Sherbrooke St West, Montr\'eal, Qu\'ebec, Canada H3A 0B9}
\email{tim.hoheisel@mcgill.ca}
\thanks{The first  and second author are partially supported by an NSERC discovery grant}
\author{Elliot Paquette}
\address{Department of Mathematics and Statistics, McGill University, 805 Sherbrooke St West,  Montr\'eal, Qu\'ebec, Canada H3A 0B9}
\email{elliot.paquette@mcgill.ca}
\begin{document}

\title[Uniqueness in nuclear norm minimization]{Flatness of the nuclear norm sphere, simultaneous polarization, and uniqueness in nuclear norm minimization}

\subjclass[2010]{15A18, 47N10, 65F22, 90C25, 90C27}

\keywords{Nuclear norm, singular value decomposition, polar decomposition, convex analysis, convex subdifferential, Fenchel conjugate, low rank minimization}

\date{\today}


\begin{abstract} In this paper we establish necessary and sufficient conditions for the existence  of  line
segments (or {\em flats}) in the  sphere of the nuclear norm via the notion of {\em simultaneous polarization} and a refined expression for the subdifferential of the nuclear norm.  This is then leveraged to provide (point-based) necessary and sufficient conditions for uniqueness of solutions  for minimizing the   nuclear norm  over an affine manifold. We further  establish an alternative  set of sufficient conditions for uniqueness, based on the interplay of the subdifferential of the nuclear norm and the range of the problem-defining linear operator. Finally, using convex duality, we show how to transfer the uniqueness results for the original problem to a whole class of nuclear norm-regularized minimization problems with a strictly convex fidelity term.
\end{abstract}

\maketitle

\section{Introduction}

\noindent
One of the most ubiquitous paradigms for   linear inverse problems in matrix space  is  {\em low rank approximation},  often cast in the form 
\begin{equation}\label{eq:LR} 
\min_{X\in \R^{n\times p}} \rank X\st \cA(X)=b.
\end{equation}
Here  $\cA:\R^{n\times p}\to \bE$ is a linear map (into a Euclidean space $\bE$) whose action is  often simply a matrix  multiplication $\cA(X)=A\cdot X$ for some $A\in \R^{m\times n}$ or a selection operator which projects $X$ onto the matrix composed of its entries from a prescribed index set $J\subset \{1,\dots, n\} \times \{1,\dots,p\}$. We direct the interested reader   to  Fazel's thesis \cite{Faz 02}, the important paper by Cand\`es and Recht \cite{CaR 09} as well as  the survey article by  Recht et al. \cite{RFP 10} for applications, solution methods and pointers to the abundant literature  for the low rank minimization problem  \eqref{eq:LR} and the low rank minimization paradigm in general.

Due to the combinatorial nature  of the rank function, problem \eqref{eq:LR} is, generally,  NP-hard (as it contains cardinality minimization as a special case, which is NP-hard \cite{FoR 13, Nat 95}),  and therefore many continuous relaxations for its numerical solution have been proposed. The predominant class of convex relaxations uses the {\em nuclear norm} (or {\em trace norm}) $\|\cdot\|_*$ as a convex approximation of the rank function. The justification for this stems from the fact that   the nuclear norm  is  the {\em convex envelope} (i.e.  the largest convex minorant) of   the rank function when restricted to a  spectral norm ball around the point in question, a fact that was first established by Fazel in her thesis \cite{Faz 02} (see also the approach by Hiriart-Urruty and Len \cite{HUL 13}).  On the other hand, the nuclear norm  is simply the $\ell_1$-norm of the vector of singular values, and the $\ell_1$-norm is known to promote sparsity \cite{CaT 05}, hence the nuclear norm promotes low rank.    Various nuclear norm-based approximations of problem \eqref{eq:LR} have been proposed, the most obvious one being  
\begin{equation} \label{eq:NNM} 
\min_{X\in \R^{n\times p}} \|X\|_*\st \cA(X)=b.
\end{equation}
Existence of solutions for this problem\footnote{Of course, we assume throughout that this problem is feasible.} is readily established as the objective function is {\em coercive}  (and the suitable continuity properties are satisfied).  Given a solution $\bar X$ of \eqref{eq:NNM}, the goal  of this paper is to establish conditions that  guarantee that $\bar X$ is, in fact, the unique solution.   This is inspired by  the study by Zhang et al. \cite{{ZYC 15}} which establishes  uniqueness results for $\ell_1$-minimization problems\footnote{Nuclear norm minimization contains $\ell_1$-minimization as a special case since $x\in \R^n$ can be identified with a diagonal matrix  $\diag(x)$ for which $\|\diag(x)\|_*=\|x\|_1$.}

We approach this task by combining tools from convex analysis and linear algebra.  The natural interplay of these areas is most obvious in the study of {\em unitarily invariant norms} \cite{HoJ 13} which comes into play here since the nuclear norm (and its dual norm, the spectral norm) are unitarily invariant.  This theory  goes back to work of von Neumann's \cite{Neu 37}, expanded on by various authors including Watson \cite{Wat 92, Wat 93}, Zietak \cite{Zie  88, Zie 93} and  de S\'a \cite{Sa 94, Sa 94.2}, and then   vastly generalized beyond norms in  Lewis'   seminal work \cite{Lew 95, Lew 96, LeS 05}.  

\subsection*{Contributions}

Our first  main contribution, \Cref{th:FlatNN}, provides  a characterization of the existence of line segments ({\em flatness}) in the boundary of the nuclear norm ball,  based on the notion of {\em simultaneous polarizability} (\Cref{def:Polar}).   In \Cref{cor:Equiv} we give a reformulation of this characterization using the singular value decomposition of a point in the nuclear norm sphere, and this directly carries over to \emph{necessary and sufficient} conditions for uniqueness (\Cref{cor:UniMin}) for solutions of the nuclear norm minimization problem \eqref{eq:NNM}.

We then extend the study by Zhang et al. \cite{{ZYC 15}} to the nuclear norm setting, starting from the following observation of Gilbert's \cite{Gil 17} for any (proper) convex function $f$ (see  \Cref{prop:CXV}):  $\bar x$ is the unique minimizer of $f$ if  $0$ is in the interior of the subdifferential of $f$ at $\bar x$. We make these conditions concrete for  problem  \eqref{eq:NNM} in \Cref{prop:Suff}. We then bridge between  these convex-analytic  conditions and  the linear-algebraic ones established earlier in  \Cref{cor:UniMin}  explicitly in  \Cref{prop:Bridge}, thus illuminating their connection. By means of a counterexample (\Cref{ex:Counter}) we show that the sufficient conditions (\Cref{ass:Suff}) are not necessary for uniqueness, which is in contrast to the (polyhedral convex) $\ell_1$-case.

Through convex analysis (\Cref{prop:FR}) we are able to transfer our findings for problem \eqref{eq:NNM} to another class of nuclear norm minimization problems (see \Cref{cor:Other} ) including nuclear  norm-regularized least-squares.

\subsection*{Roadmap} We present  in \Cref{sec:Prelim} the necessary background from linear algebra and convex analysis, including a novel result on the convex geometry of the subdifferential of the nuclear norm. \Cref{sec:Flat} is devoted to characterizing the existence of line segments in the nuclear norm sphere. We transfer these findings to nuclear norm minimization problems in \Cref{sec:NNO}. We close out with some final remarks in \Cref{sec:Final}.


\subsection*{Notation}The vector $e_i\in \R^n$ is the $i$-th standard unit vector in $\R^n$. For a vector $x\in \R^n$, $\diag(x)$ will be a diagonal matrix with $x$ on its diagonal, whose size will be clear from the context (and which may be rectangular). For $X\in \R^{n\times p}$, we will generate the vector of its diagonal entries via $\Diag(X)$. The space of $n\times n$ (real) symmetric matrices is denoted by $\bS^n$,  $\bS^n_+$ is  the positive semidefinite cone while $\bS^n_{++}$ denotes the positive definite matrices in $\bS^n$. 
 The set of $n\times n$ orthogonal matrices is denoted by $O(n)$.  For a set $C$ in a real vector space, we define  $\R_+C\coloneqq \set{tx}{t\geq 0, \; x\in C}$, the smallest cone that contains $C$. The line segment between two points $x,y$ in a real vector space  is denoted by $[x,y]$. The set of all linear maps between two Euclidean  spaces $V,W$ is denoted by $\cL(V,W)$. For $\cA\in \cL(V,W)$, we write $\ker \cA$ and $\rge \cA$ for its {\em kernel} and {\em range}, respectively. Its adjoint map is denoted by $\cA^*$.

\section{Preliminaries}\label{sec:Prelim}

\noindent
In what follows, $\bE$ will be a Euclidean space, i.e.\ a finite-dimensional real inner product space with its ambient inner product denoted by $\ip{\cdot}{\cdot}$.  The induced norm is denoted by $\|\cdot\|$, i.e.\ $\|x\|:=\sqrt{\ip{x}{x}}$ for all $x\in \bE$.  For instance, we equip $\R^{n\times p}$ with the (Frobenius) inner product 
\[
\ip{X}{Y}:=\tr(X^TY)\quad \forall X,Y\in \R^{n\times p},
\]
which  induces the {\em Frobenius norm} 
\[
\|X\|:=\sqrt{\ip{X}{X}}=\sqrt{\sum_{i=1}^n\sum_{j=1}^px_{ij}^2}\quad \forall X\in \R^{n\times p}.
\]
For $X\in \R^{n\times p}$ its {\em nuclear norm} is given by 
\[
\|X\|_*:=\tr(\sqrt{X^TX})=\tr(\sqrt{XX^T}).
\]
The definition implies  the following fact  used frequently in our study:
\begin{equation}\label{eq:SPD}
\|X\|_*=\tr(X)\quad \forall X\in \bS^n_+.
\end{equation}
The {\em dual  norm} of the  nuclear norm  is
\[
\|X\|_{op}:=\max_{\|Y\|_*\leq 1} \ip{X}{Y}=\max_{\|v\|\leq 1}\|Xv\|,
\]
which is called  the {\em operator norm} or {\em spectral norm}. In what follows, we will define
\[
\bB_{op}:=\set{X}{\|X\|_{op}\leq 1}
\]
to be the  operator norm unit ball in a matrix space  whose dimension will be  clear from the context.  The following  simple estimate for the operator norm will be useful for our study. 

\begin{lemma}\label{lem:Op} For $A\in \R^{n\times p}$. Then the Euclidean norm of every  column and row of $A$ is bounded above by $\|A\|_{op}$. In particular, we have 
$
a_{ij}\leq \|A\|_{op}$ for all  $i=1,\dots,n,\; j=1,\dots,p.$
\end{lemma}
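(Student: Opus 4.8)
The plan is to read off both inequalities directly from the variational description of the operator norm, with essentially no work beyond bookkeeping. First I would handle the columns: fix $j\in\{1,\dots,p\}$ and note that the $j$-th column of $A$ is exactly the vector $Ae_j\in\R^n$. Since $\|e_j\|=1$, the identity $\|A\|_{op}=\max_{\|v\|\le 1}\|Av\|$ recorded just before the lemma yields $\|Ae_j\|\le\|A\|_{op}$, which is precisely the asserted bound on the Euclidean norm of each column.

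For the rows I would pass to the transpose. The $i$-th row of $A$, regarded as a vector in $\R^p$, is the $i$-th column of $A^T$; by the column case applied to $A^T$ its Euclidean norm is at most $\|A^T\|_{op}$. Hence it suffices to observe that $\|A^T\|_{op}=\|A\|_{op}$. This follows from the symmetric bilinear-form expression $\|A\|_{op}=\max_{\|u\|\le 1,\,\|v\|\le 1}\ip{u}{Av}=\max_{\|u\|\le 1,\,\|v\|\le 1}\ip{A^Tu}{v}$, or, if one prefers, from the fact that $A$ and $A^T$ share the same singular values.

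Finally, for the entrywise estimate, note that $a_{ij}$ is the $i$-th coordinate of the vector $Ae_j\in\R^n$, so $|a_{ij}|\le\|Ae_j\|\le\|A\|_{op}$, and in particular $a_{ij}\le\|A\|_{op}$ for all $i,j$.

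There is no genuine obstacle here; the one point worth making explicit is the transpose-invariance of the operator norm, which is why I would state the symmetric max-over-unit-vectors formula rather than simply quote it.
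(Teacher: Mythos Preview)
Your proposal is correct and follows essentially the same argument as the paper: the column bound via $\|Ae_j\|\le\|A\|_{op}$ is identical, and for rows the paper simply says ``multiplying standard unit vectors $e_j^T$ from the left,'' which amounts to the transpose argument you spell out explicitly. Your added justification of $\|A^T\|_{op}=\|A\|_{op}$ and the explicit derivation of the entrywise bound are minor elaborations of what the paper leaves implicit.
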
 
\begin{proof} Let $a_j$ be the  $j$-th column of $A$. Then
\[
\|a_j\|=\|Ae_j\|\leq \sup_{\|x\|=1}\|Ax\|=\|A\|_{op}.
\]
Multiplying standard unit vectors $e_j^T$ from the left, we get the analogous statement for rows. 
\end{proof}

\noindent
The following estimate for the nuclear norm of block matrices is important to our study.

\begin{lemma}\label{lem:Embed} Let $n> p$, $X\in \R^{n\times p}$ and  $Y\in \R^{n\times (n-p)}$. Then 
\[
\|X\|_*\leq \|[X\; Y]\|_*,
\]
where equality holds if and only if $Y = 0.$
\end{lemma}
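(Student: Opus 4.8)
The plan is to reduce everything to the identity $\|[X\;Y]\|_*=\tr\sqrt{[X\;Y][X\;Y]^T}=\tr\sqrt{XX^T+YY^T}$, which is immediate from the definition of the nuclear norm, together with $\|X\|_*=\tr\sqrt{XX^T}$ (here $XX^T$ is regarded as an $n\times n$ matrix of rank at most $p$). Since $YY^T\in\bS^n_+$, we have $XX^T+YY^T\succeq XX^T\succeq 0$ in the Loewner order. The scalar square root is operator monotone on $[0,\infty)$, so $\sqrt{XX^T+YY^T}\succeq\sqrt{XX^T}$, i.e.\ $D:=\sqrt{XX^T+YY^T}-\sqrt{XX^T}\in\bS^n_+$. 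Taking traces,
\[
\|[X\;Y]\|_*-\|X\|_*=\tr D\ge 0,
\]
because the trace of a positive semidefinite matrix is nonnegative; this is the claimed inequality.

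For the equality statement, the implication $Y=0\Rightarrow\|[X\;Y]\|_*=\|X\|_*$ is trivial. Conversely, if equality holds then $\tr D=0$ with $D\in\bS^n_+$, and a positive semidefinite matrix of zero trace vanishes, so $\sqrt{XX^T+YY^T}=\sqrt{XX^T}$. Squaring both sides and invoking uniqueness of the positive semidefinite square root gives $XX^T+YY^T=XX^T$, hence $YY^T=0$; since $\tr(YY^T)=\|Y\|^2$ this forces $Y=0$.

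The only ingredient beyond elementary linear algebra is operator monotonicity of $t\mapsto\sqrt t$. If one prefers a fully self-contained argument, it can be replaced by Weyl's monotonicity theorem for eigenvalues under addition of a positive semidefinite matrix: $\lambda_i(XX^T+YY^T)\ge\lambda_i(XX^T)$ for every $i$, hence $\sigma_i([X\;Y])\ge\sigma_i(X)$ for all $i$ (padding the singular values of $X$ by zeros up to index $n$); summing over $i$ gives the inequality, and equality forces $\sigma_i([X\;Y])=\sigma_i(X)$ for all $i$, whence $\|[X\;Y]\|^2=\|X\|^2$ in the Frobenius norm, i.e.\ $\|Y\|^2=0$. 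I expect the only real care needed is the singular-value bookkeeping — keeping track that $[X\;Y]$ has $n$ singular values while $X$ has only $p$ of them — and settling on which of the two routes to present.
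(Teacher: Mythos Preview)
Your proof is correct and takes a genuinely different route from the paper's. The paper argues via duality: since $\|W\|_{op}=\|[W\;0]\|_{op}$, the set $\{[W\;0]:\|W\|_{op}\le1\}$ sits inside the operator-norm unit ball of $\R^{n\times n}$, and maximizing $\ip{[X\;Y]}{\cdot}$ over this smaller set already yields $\|X\|_*$; strictness for $Y\neq0$ is then obtained by perturbing the maximizer in the $Y$-direction. Your approach instead works directly with the spectral definition $\|A\|_*=\tr\sqrt{AA^T}$, using $[X\;Y][X\;Y]^T=XX^T+YY^T$ together with operator monotonicity of $t\mapsto\sqrt{t}$ (or, in the alternative version, Weyl's inequality). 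Your route gives a particularly clean equality case: $\tr D=0$ with $D\in\bS^n_+$ forces $D=0$, hence $YY^T=0$, with no perturbation argument needed. The paper's duality argument, on the other hand, fits more naturally into the convex-analytic theme of the surrounding text and avoids invoking L\"owner--Heinz. Both are short and self-contained; the choice is largely a matter of taste. One minor remark: once you have $\sqrt{XX^T+YY^T}=\sqrt{XX^T}$, squaring alone gives $XX^T+YY^T=XX^T$; uniqueness of the positive semidefinite square root is not actually needed at that step.
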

\begin{proof} Observe that $\|W\|_{op}=\|[W\; 0]\|_{op}$. Hence 
\[
\cC:=\set{[W\; 0]\in \R^{n\times n}}{ W\in \R^{n\times p},\;\|W\|_{op}\leq 1}\subset \bB_{op}. 
\]
Consequently
\[
\|[X\; Y]\|_*=\max_{[W\; Z]\in \bB_{op}}\ip{[X\; Y]}{[W\;Z]}\geq \max_{[W\; 0]\in \cC}\ip{[X\; Y]}{[W\;0]}=\|X\|_*.
\]
Clearly, the inequality is strict if $Y\neq 0$ (use, e.g.,   $Z=Y$) and an equality otherwise.
\end{proof}

\noindent
We point out that the above result allows one to always embed  problem \eqref{eq:NNM} (defined by $\cA\in \cL(\R^{n\times p},\bE)$ and $b\in \bE$) in (potentially rectangular) matrix space $\R^{n\times p}$ (w.l.o.g. $n\geq p$) into the (square) matrix space $\R^{n\times n}$. To this end, identify every element $\tilde X\in \R^{n\times n}$ with the block matrix   $\tilde X=[X\;Y]$ for   $X\in \R^{n\times p}, Y\in \R^{n\times (n-p)}$, define the linear operator $\tilde \cA:\tilde X\to \cA(X)$ and the right-hand side $\tilde b:=b$. If  we now  consider the `padded' problem
\begin{equation}\label{eq:Pad}
\min_{[X\; Y]\in \R^{n\times n}} \|[X\;Y]\|_*\st \tilde \cA([X\; Y])=\tilde b
\end{equation}
it is an immediate consequence of \Cref{lem:Embed} that $\bar X$ is a solution of \eqref{eq:NNM} if and only if $[\bar X\; 0]$ is a solution of \eqref{eq:Pad}.

\subsection*{Singular value decomposition}  For the facts and concepts presented in this paragraph we refer the uninitiated reader to  Horn and Johnson \cite{HoJ 13} for details. 
Throughout  (w.l.o.g.) we assume that  $n\geq p$. For  $X\in\R^{n\times p}$,  with $\rank X=r$, there exist   orthogonal matrices $U\in O(n)$ and $V\in O(p)$ (with columns $u_1,\dots,u_n$ and $v_1,\dots,v_p$, respectively)    and unique  real numbers 
\[
\sigma_1(X)\geq \sigma_2(X)\geq \sigma_{r}(X)>0=\sigma_{r+1}=\dots=\sigma_n(X)
\]
such that
\[
X=U\diag(\sigma(X))V^T=\sum_{i=1}^r \sigma_i(X)u_iv_i^T.
\]
This is called a {\em singular value decomposition}  (SVD) of $X$. Note that the  positive singular values of $X$ are exactly the square roots of the  nonzero eigenvalues of $XX^T$ (or $X^TX$).
We say that two matrices $X,Y\in \R^{n\times p}$ have a {\em simultaneous singular value decomposition} if there exist $(\bar U,\bar V)\in O(n)\times O(p)$ such that
\[
X=\bar U\diag(\sigma(X))\bar V^T\AND Y=\bar U\diag(\sigma(Y))\bar V^T.
\]
The next result, see e.g.\ \cite[Theorem 2.1]{Lew 95},  due to von Neumann,  characterizes simultaneous singular value decompositions.
\begin{theorem}[von Neumann]\label{th:Neumann} For $X,Y\in \R^{n\times p}$  we have 
\[
\ip{X}{Y}\leq \ip{\sigma(X)}{\sigma(Y)}.
\]
Equality holds if and only if $X$ and $Y$ have simultaneous singular value decompositions.
\end{theorem}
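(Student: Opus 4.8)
\emph{Strategy and the inequality.} The plan is to reduce to the case where $X$ is a (rectangular) diagonal matrix, prove the inequality by a combinatorial argument on the entries of the singular vectors of $Y$, and then read off the equality characterization by running that argument in reverse. The Frobenius inner product, the singular values, and the property of admitting a simultaneous singular value decomposition are all invariant under the substitution $(X,Y)\mapsto(\Phi X\Psi^{T},\Phi Y\Psi^{T})$ with $\Phi\in O(n)$ and $\Psi\in O(p)$; taking $\Phi,\Psi$ from an SVD of $X$, I may therefore assume $X=\diag(\sigma(X))$. Fixing an SVD $Y=U\diag(\sigma(Y))V^{T}$ with $U=(u_{ij})\in O(n)$, $V=(v_{ij})\in O(p)$, a short computation gives $\ip{X}{Y}=\sum_{i,j=1}^{p}\sigma_{i}(X)\sigma_{j}(Y)\,u_{ij}v_{ij}$. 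Put $R:=(u_{ij}v_{ij})_{i,j=1}^{p}$. Applying the Cauchy--Schwarz inequality to the rows and columns of $U$ and $V$ (the relevant sub-vectors having Euclidean norm at most $1$) shows $\sum_{j}|R_{ij}|\le1$ and $\sum_{i}|R_{ij}|\le1$, i.e.\ $R$ is doubly substochastic. Hence $|R|$ is dominated entrywise by a doubly stochastic matrix $S$, which by the Birkhoff--von Neumann theorem is a convex combination of permutation matrices; since $\sigma(X)$ and $\sigma(Y)$ are nonincreasing, the rearrangement inequality gives $\sum_{i,j}\sigma_{i}(X)\sigma_{j}(Y)S_{ij}\le\ip{\sigma(X)}{\sigma(Y)}$. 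Combined with $\ip{X}{Y}=\sum_{i,j}\sigma_{i}(X)\sigma_{j}(Y)R_{ij}\le\sum_{i,j}\sigma_{i}(X)\sigma_{j}(Y)S_{ij}$ (valid because $\sigma_{i}(X),\sigma_{j}(Y)\ge0$ and $R_{ij}\le|R_{ij}|\le S_{ij}$), this proves the inequality. One could instead invoke Ky Fan's maximum principle, but the argument above has the advantage of exposing the equality conditions.

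\emph{The equality case.} Sufficiency is a one-line computation: if $X=\bar U\diag(\sigma(X))\bar V^{T}$ and $Y=\bar U\diag(\sigma(Y))\bar V^{T}$, then orthogonality of $\bar U,\bar V$ yields $\ip{X}{Y}=\tr\bigl(\diag(\sigma(X))^{T}\diag(\sigma(Y))\bigr)=\ip{\sigma(X)}{\sigma(Y)}$. For necessity I would keep $X=\diag(\sigma(X))$ and propagate equality back through the chain: equality in the first inequality forces $u_{ij}v_{ij}\ge0$ whenever $\sigma_{i}(X)\sigma_{j}(Y)>0$; equality in the middle step forces $S$ to coincide with $|R|$ on that same index set; and tightness of the rearrangement step restricts which permutations can occur in $S$. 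Exploiting the residual freedom to rotate $U$ and $V$ within blocks of equal singular values of $Y$ and within $\ker Y$, and using the now-tight Cauchy--Schwarz bounds on the active rows and columns (which force proportionality, hence equality up to sign, of the corresponding parts of $U$ and $V$, the sign being fixed by $u_{ij}v_{ij}\ge0$), one arranges that the parts of $U$ and $V$ attached to the common positive-singular-value support agree, while the remaining singular vectors of $Y$ are chosen arbitrarily. Assembling these choices produces a single pair $(\bar U,\bar V)\in O(n)\times O(p)$ with $X=\bar U\diag(\sigma(X))\bar V^{T}$ and $Y=\bar U\diag(\sigma(Y))\bar V^{T}$.

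\emph{Main obstacle.} The inequality is routine; the real work is in the necessity half of the equality statement, and within it the bookkeeping that patches the block-wise conclusions into one global orthogonal pair --- the delicate cases being repeated singular values, where a simultaneous SVD is pinned down only up to block rotations, and the interaction between $\ker X$ and $\ker Y$. A superficially cleaner route is to pass to the symmetric dilations $\hat X=\left(\begin{smallmatrix}0&X\\X^{T}&0\end{smallmatrix}\right)$ and $\hat Y=\left(\begin{smallmatrix}0&Y\\Y^{T}&0\end{smallmatrix}\right)$ in $\bS^{n+p}$, for which $\ip{\hat X}{\hat Y}=2\ip{X}{Y}$ while the eigenvalues of $\hat X,\hat Y$ are precisely the signed singular values, so the statement reduces to the equality case of Fan's trace inequality for symmetric matrices; however, the multiplicity and kernel subtleties simply resurface when one unfolds the common orthogonal eigenbasis of $\hat X,\hat Y$ back into a common SVD of $X$ and $Y$, so little is gained.
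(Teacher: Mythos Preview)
The paper does not actually prove \Cref{th:Neumann}; it merely states the result and cites \cite[Theorem~2.1]{Lew 95}. So there is no in-paper argument to compare yours against, and I can only assess your proposal on its own merits.

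Your treatment of the inequality is correct and follows one of the classical routes: reduce to $X=\diag(\sigma(X))$ by orthogonal invariance, express $\ip{X}{Y}$ as a bilinear form in $\sigma(X),\sigma(Y)$ with coefficient matrix $R=(u_{ij}v_{ij})_{i,j\le p}$, observe via Cauchy--Schwarz on rows/columns that $|R|$ is doubly substochastic, dominate by a doubly stochastic $S$, and finish with Birkhoff--von~Neumann plus the rearrangement inequality. The small facts you invoke (every doubly substochastic matrix is entrywise dominated by a doubly stochastic one; rearrangement for nonincreasing nonnegative sequences) are standard. Sufficiency in the equality case is, as you say, a one-line computation.

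The necessity direction is where your write-up is a sketch rather than a proof. Your plan---propagate equality back through the three inequalities, then use the tight Cauchy--Schwarz bounds to force proportionality of the relevant sub-rows of $U$ and $V$, and finally exploit the rotational freedom within equal-singular-value blocks and kernels to assemble a single $(\bar U,\bar V)$---is the right one, and is essentially how the proof in \cite{Lew 95} (building on the vector case for symmetric gauge functions) proceeds. But the phrases ``one arranges that'' and ``assembling these choices produces'' hide exactly the bookkeeping you flag as the obstacle: when $\sigma(X)$ and $\sigma(Y)$ have overlapping but non-identical blocks of repeated values, showing that the block-wise proportionality constraints are mutually compatible and can be realized by a \emph{single} orthogonal pair requires a careful induction or partition argument. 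Your alternative via symmetric dilations is a legitimate shortcut to the inequality and to the existence of a common eigenbasis for $\hat X,\hat Y$, and you are right that unfolding that eigenbasis into a common SVD still needs care---but it is arguably less painful than the direct route, since the Hermitian equality case (simultaneous diagonalizability from equality in Fan's inequality) is textbook. If you want a complete proof, I would recommend committing to the dilation route and writing out the unfolding step explicitly.
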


\noindent
Through the singular value decomposition, we  generate the  map
\[
\sigma:X\in \R^{n\times p}\to \sigma(X)\in\R^p.
\]
Using this, the nuclear and operator norm of $X$, respectively, can be expressed as the $\ell_1$- and $\ell_\infty$-norm, respectively,  of the vector of singular values of $X$, i.e. 
\[
\|X\|_*=\sum_{i=1}^r \sigma_{i}(X)\AND \|X\|_{op}=\sigma_1(X).
\]
Moreover, we find that the nuclear and the operator norm are {\em orthogonally invariant}, i.e. for all $X\in \R^{n\times p}$, we have
\begin{equation}\label{eq:OI}
\|UXV\|_*=\|X\|_*\AND \|UXV\|_{op}=\|VXU\|_{op}\quad \forall (U,V)\in O(n)\times O(p).
\end{equation}

\noindent
There is an important extension of the above equation in the rectangular case.  To formulate it, we recall the {\em Stiefel manifold} \cite{Sti 36}.
\begin{definition}[Stiefel manifold]\label{def:Stiefel} 
  The {\em Stiefel manifold} $\mathcal{V}_{n,p}$ is the collection of matrices in $\R^{n \times p}$ with orthonormal columns, i.e.
  \[
  \cV_{n,p}:=\set{U\in \R^{n\times p}}{U^TU=I_p}.
  \]
\end{definition}
\noindent The nuclear norm also has invariance on one side by multiplication by elements of the Stiefel manifold.
\begin{lemma}\label{lem:OI} Let $X\in \R^{n\times p}$ and $U\in \mathcal{V}_{n,p}$. Then 
$
\|XU^T\|_*=\|X\|_*.
$
\end{lemma}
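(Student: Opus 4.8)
The plan is to reduce the identity to the orthogonal invariance of the nuclear norm, \eqref{eq:OI}, combined with the padding identity from \Cref{lem:Embed}. Recall that $U\in\cV_{n,p}$ means $U\in\R^{n\times p}$ with $U^TU=I_p$, so the columns of $U$ form an orthonormal system of $p$ vectors in $\R^n$. Since $n\geq p$, Gram--Schmidt lets me extend this system to an orthonormal basis of $\R^n$; equivalently, there exists $U_\perp\in\R^{n\times(n-p)}$ with $\bar U:=[U\;U_\perp]\in O(n)$. If $n=p$ this step is vacuous ($\cV_{n,n}=O(n)$) and the assertion is immediate from \eqref{eq:OI}, so from now on assume $n>p$.

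The crux is then the block computation
\[
[X\;0]\,\bar U^T=[X\;0]\begin{bmatrix}U^T\\ U_\perp^T\end{bmatrix}=XU^T+0\cdot U_\perp^T=XU^T,
\]
where $0\in\R^{n\times(n-p)}$. Since $\bar U^T\in O(n)$, applying \eqref{eq:OI} in the square space $\R^{n\times n}$ (with left factor $I_n$ and right factor $\bar U^T$) gives $\|XU^T\|_*=\|[X\;0]\,\bar U^T\|_*=\|[X\;0]\|_*$, while \Cref{lem:Embed} (with block $Y=0$) yields $\|[X\;0]\|_*=\|X\|_*$. Chaining these two equalities proves $\|XU^T\|_*=\|X\|_*$.

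I do not expect a genuine obstacle here: the only ingredient beyond bookkeeping is the completion of $U$ to an orthogonal matrix, which is elementary. As an alternative route that sidesteps \Cref{lem:Embed}, one can argue straight from an SVD $X=P\diag(\sigma(X))Q^T$ with $(P,Q)\in O(n)\times O(p)$: then $XU^T=P\diag(\sigma(X))(UQ)^T$ with $UQ\in\cV_{n,p}$, and $\diag(\sigma(X))(UQ)^T=\sum_{i=1}^p\sigma_i(X)\,e_i(UQe_i)^T$ is already in singular-value-decomposed form, since $e_1,\dots,e_p\in\R^n$ are orthonormal and the columns $UQe_1,\dots,UQe_p$ are orthonormal (as $(UQ)^T(UQ)=I_p$); hence $\|XU^T\|_*=\sum_{i=1}^p\sigma_i(X)=\|X\|_*$. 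I would still favor the first route, as it reuses results already in place.
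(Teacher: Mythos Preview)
Your proposal is correct and follows essentially the same route as the paper: extend $U$ to an orthogonal matrix, recognize $XU^T=[X\;0]\bar U^T$, then invoke orthogonal invariance \eqref{eq:OI} and the padding identity from \Cref{lem:Embed}. The alternative SVD argument you sketch is also fine but is not the paper's approach.
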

\begin{proof} 
As $U$ has orthonormal columns, we may extend it to an orthogonal matrix $[U\; W]\in  O(n)$. Then
\[
\|XU^T\|_*=\left\|[X\; 0] \cdot\left[ \begin{matrix}U^T\\W^T
\end{matrix}\right]
\right\|_*=\|[X\; 0]\|_*=\|X\|_*,
\]
where the second identity uses the orthogonal invariance from \eqref{eq:OI} and the third is due to \Cref{lem:Embed}. 
\end{proof}

\subsection*{Tools from convex analysis}  For the facts and concepts presented in this paragraph we refer the uninitiated reader to the textbooks by Rockafellar \cite{Roc 70}, Hiriart-Urruty and Lemar\'echal \cite{HUL 01}, Borwein and Lewis \cite{BoL 00} or Rockafellar and Wets \cite[Chapter 11]{RoW  98}.

A  function $f:\bE\to\rp$ is called proper if $\dom f:=\set{x}{f(x)<+\infty}\neq\emptyset$. We say that $f$ is {\em convex} if its epigraph $\epi f:=\set{(x,\alpha)\in \bE\times\R}{f(x)\leq \alpha}$ is convex, and we say that it is {\em closed} if $\epi f$ is closed. Its (Fenchel) conjugate $f^*:\bE\to\rp$ is
$
f^*(y):=\sup_{x\in \dom f} \{\ip{y}{x}-f(x)\}.
$
Its (convex) subdifferential at $\bar x\in \dom f$ is given by
 \[
 \p f(\bar x):=\set{y\in \bE}{f(\bar x)+\ip{y}{x-\bar x}\leq f(x)\;\forall x\in \dom f}.
 \]
 An important (proper, convex) extended real-valued function is the {\em indicator function}  of a (nonempty, convex) set $C\subset \bE$ which is
 \[
\delta_C:\bE\to\rp,\quad\delta_C(x)=\begin{cases}0,& x\in C,\\ +\infty,& {\rm else}. \end{cases}
\]
Its subdifferential is  $\p\delta_C(\bar x)=\set{v}{\ip{v}{x-\bar x}\leq 0\; \forall x\in C}$ for all $\bar x\in C$. Its conjugate is the support function of $C$, i.e. $\delta_C^*(y)=\sup_{x\in C}\ip{x}{y}=:\sig_C(y).$ We point out that the support functions 
of compact, convex, symmetric sets $C$ that contain $0$ (thus $0\in\inter C$) are exactly the norms on $\bE$ \cite[Theorem 15.2]{Roc 70} .

The subdifferential of the  $\ell_1$-norm $\|\cdot\|_1:\R^n\to\R,\;\|x\|_1=\sum_{i=1}|x_i|$, reads 
  \begin{equation}\label{eq:SubL1}
\p\|\cdot\|_1(x)= \left(\begin{cases} \sgn(x_i),& x_i\neq 0\\
[-1,1],& x_i=0
\end{cases}\right)_{i=1}^n=\set{y\in \bB_\infty}{\ip{x}{y}=\|x\|_1}.
\end{equation}
Obviously, the most important example to our study is the subdifferential of the nuclear norm.  
 \begin{proposition}[Subdifferential of nuclear norm] \label{prop:SD} Let $\bar X\in \R^{n\times p}$ and let 
$(\bar U,\bar V)\in O(n)\times O(p)$ such that 
\[
\bar U\diag(\sigma(\bar X))\bar V^T=\bar X.
\] 
The following hold: 
\begin{itemize}
\item[(a)] We have $Y\in \p \|\cdot\|_*(\bar X)$ if and only if  $\bar X$ and $Y$ have a simultaneous singular value decomposition and 
$\sigma(Y)\in \p\|\cdot\|_1(\sigma(\bar X))$.
\item[(b)] It holds that
\begin{eqnarray}
\p\|\cdot\|_*(\bar X)& = & \set{Y}{\ip{\bar X}{Y}=\|\bar X\|_*,\; \|Y\|_{op}\leq  1}\label{eq:SD1}\footnotemark\\ 
& = & \bar U\p \|\cdot\|_*(\diag(\sigma(\bar X))\bar V^T.\label{eq:SD3}
\end{eqnarray}
\end{itemize}
\end{proposition}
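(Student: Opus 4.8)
\noindent
The plan is to prove the three assertions in the order \eqref{eq:SD1}, then part (a), then \eqref{eq:SD3}, with \eqref{eq:SD1} as the backbone and von Neumann's theorem (\Cref{th:Neumann}) as the only nontrivial input. \emph{Step 1: the identity \eqref{eq:SD1}.} Since $\|\cdot\|_*$ is finite and convex on $\R^{n\times p}$, membership $Y\in\p\|\cdot\|_*(\bar X)$ means $\|\bar X\|_*+\ip{Y}{X-\bar X}\le\|X\|_*$ for all $X$. Evaluating this at $X=0$ yields $\ip{Y}{\bar X}\ge\|\bar X\|_*$ and at $X=2\bar X$ yields $\ip{Y}{\bar X}\le\|\bar X\|_*$, hence $\ip{Y}{\bar X}=\|\bar X\|_*$; the subgradient inequality then reduces to $\ip{Y}{X}\le\|X\|_*$ for all $X$, i.e.\ $\|Y\|_{op}\le 1$ by the very definition of the operator norm. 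Conversely, if $\|Y\|_{op}\le 1$ and $\ip{Y}{\bar X}=\|\bar X\|_*$, then the dual-norm inequality $\ip{Y}{X}\le\|Y\|_{op}\|X\|_*\le\|X\|_*$ gives $\|X\|_*\ge\ip{Y}{X}=\|\bar X\|_*+\ip{Y}{X-\bar X}$ for all $X$, so $Y\in\p\|\cdot\|_*(\bar X)$. (This is just the standard description of the subdifferential of a norm as the exposed face of its dual unit ball $\bB_{op}$ in the direction $\bar X$, using that $\|\cdot\|_*$ is the support function $\sig_{\bB_{op}}$ of $\bB_{op}$.)

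\emph{Step 2: part (a).} For any $Y$ with $\|Y\|_{op}\le 1$, I would chain von Neumann's inequality (\Cref{th:Neumann}) with H\"older's inequality for the dual pair $(\ell_1,\ell_\infty)$:
\[
\ip{\bar X}{Y}\;\le\;\ip{\sigma(\bar X)}{\sigma(Y)}\;\le\;\|\sigma(\bar X)\|_1\,\|\sigma(Y)\|_\infty\;=\;\|\bar X\|_*\,\|Y\|_{op}\;\le\;\|\bar X\|_*,
\]
using $\|\bar X\|_*=\|\sigma(\bar X)\|_1$ and $\|Y\|_{op}=\|\sigma(Y)\|_\infty$. If $Y\in\p\|\cdot\|_*(\bar X)$, then by Step 1 both $\ip{\bar X}{Y}=\|\bar X\|_*$ and $\|Y\|_{op}\le 1$ hold, so every inequality in the chain is an equality. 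Equality in the first is, by \Cref{th:Neumann}, exactly the existence of a simultaneous singular value decomposition of $\bar X$ and $Y$; equality in the second, namely $\ip{\sigma(\bar X)}{\sigma(Y)}=\|\sigma(\bar X)\|_1$, together with $\|\sigma(Y)\|_\infty\le 1$ is precisely $\sigma(Y)\in\p\|\cdot\|_1(\sigma(\bar X))$ by \eqref{eq:SubL1}. Conversely, a simultaneous SVD forces $\ip{\bar X}{Y}=\ip{\sigma(\bar X)}{\sigma(Y)}$ (equality case of \Cref{th:Neumann}), and then $\sigma(Y)\in\p\|\cdot\|_1(\sigma(\bar X))$ gives, via \eqref{eq:SubL1}, both $\ip{\bar X}{Y}=\|\sigma(\bar X)\|_1=\|\bar X\|_*$ and $\|Y\|_{op}=\|\sigma(Y)\|_\infty\le 1$, so $Y\in\p\|\cdot\|_*(\bar X)$ by Step 1.

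\emph{Step 3: the identity \eqref{eq:SD3}.} The map $\Phi\colon Z\mapsto\bar U Z\bar V^T$ is a linear bijection of $\R^{n\times p}$ (as $\bar U\in O(n)$, $\bar V\in O(p)$) that preserves the Frobenius inner product, since $\ip{\bar U A\bar V^T}{\bar U B\bar V^T}=\tr(\bar V A^T\bar U^T\bar U B\bar V^T)=\tr(A^TB)=\ip{A}{B}$. Writing $Y=\Phi(Z)$ and invoking orthogonal invariance \eqref{eq:OI}, one has $\|Y\|_{op}=\|Z\|_{op}$, $\|\bar X\|_*=\|\diag(\sigma(\bar X))\|_*$, and $\ip{\bar X}{Y}=\ip{\bar U\diag(\sigma(\bar X))\bar V^T}{\bar U Z\bar V^T}=\ip{\diag(\sigma(\bar X))}{Z}$. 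Substituting into \eqref{eq:SD1} shows that $Y\in\p\|\cdot\|_*(\bar X)$ if and only if $Z\in\p\|\cdot\|_*(\diag(\sigma(\bar X)))$, which, rearranged via $Z=\bar U^TY\bar V$, is exactly \eqref{eq:SD3}.

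\emph{Main obstacle.} Steps 1 and 3 are routine duality and invariance bookkeeping. The crux is Step 2, and the only delicate point there is that all three inequalities in the displayed chain collapse simultaneously and that the resulting middle equality is correctly read off as membership in $\p\|\cdot\|_1(\sigma(\bar X))$ via \eqref{eq:SubL1}; note in particular that any such $\ell_1$-subgradient has its first $\rank\bar X$ coordinates equal to $1$ and the remaining ones in $[-1,1]$, which is consistent with $\sigma(Y)$ being a bona fide (nonnegative, non-increasing) vector of singular values. An alternative but less self-contained derivation of (a) would quote Lewis's subdifferential formula for orthogonally invariant matrix functions; the argument above needs only \Cref{th:Neumann}.
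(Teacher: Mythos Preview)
Your proof is correct and, in fact, more self-contained than the paper's own treatment. The paper does not argue any of the three assertions but simply defers to the literature: part~(a) is attributed to \cite[Corollary~2.5]{Lew 95} (Lewis's subdifferential calculus for orthogonally invariant functions), and part~(b), in particular \eqref{eq:SD3}, to \cite[Theorem~3.1]{Zie 93}. You instead derive everything directly from the dual-norm description of the subdifferential of a norm (your Step~1, which is elementary) together with von Neumann's trace inequality (\Cref{th:Neumann}), which the paper already records. The upshot is that your argument keeps the exposition closed at the cost of a few extra lines, whereas the paper's route signals that the proposition is classical and not a contribution of the present work. Your closing remark that one could alternatively quote Lewis's general formula is exactly the choice the paper made.
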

\begin{proof} (a)  See \cite[Corollary 2.5]{Lew 95}.
\smallskip

\noindent
(b) The  expressions  for the subdifferential   can be found in \cite{Zie 93}, see, in particular,  \cite[Theorem 3.1]{Zie 93} for the characterization in \eqref{eq:SD3}. 

\end{proof}

\noindent
For a convex set $C\subset \bE$,  its {\em affine hull}, denoted by $\aff C$, is the smallest affine set that contains $C$. In particular, $\aff C$ is a subspace if and only if it contains $0$. The {\em subspace parallel to $C$} is defined to be the unique subspace parallel to $\aff C$ and  given by $\para C\coloneqq\aff C -\bar x$ for any $\bar x\in  C$.  Clearly, this entails that $\ri C=\inter C$ if and only if the latter is nonempty, i.e. when $\para C=\bE$.

The  {\em relative interior} $\ri C$ of $C$ is  its  interior in the relative topology with respect to its affine hull. The following characterization of relative interior points is useful to our study, see, e.g., \cite[Exerc.~13, Ch.~1]{BoL 00}: 
\begin{equation}\label{eq:RelInt}
x\in \ri C \IFF  \R_{+}(C-x)=\para C.
\end{equation}
For more details on the relative interior we refer the reader to Rockafellar \cite[Chapter 6]{Roc 70}.

We will now exploit the representation in \eqref{eq:SD3} to derive yet another representation of the subdifferential of the nuclear norm as well as its relative interior and parallel subspace. This  is useful to  our study but also of independent interest. We need the following lemma.
\begin{lemma}\label{lem:SDAux}
For  $r\leq p(\leq n)$ set
\[
\cT:=\set{B\in \R^{n\times p}}{\Diag(B)\in \{1\}^r\times \R^{n-r},\; \|B\|_{op}\leq 1}.
\]
Then
\[
\cT=\set{\left(\begin{smallmatrix}I_r & 0 \\ 0 &  R\end{smallmatrix}\right)}{R\in \R^{(n-r)\times (p-r)},\;\|R\|_{op}\leq 1}.
\]
\end{lemma}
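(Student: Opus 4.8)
The plan is to prove the two inclusions separately; the only real content is the elementary observation that a vector of Euclidean norm at most $1$ having a coordinate equal to $1$ must be the corresponding standard basis vector, together with the fact that the operator norm of a (possibly rectangular) block-diagonal matrix is the maximum of the operator norms of its blocks.

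First I would record that latter fact, since it is used in both directions. If $B=\left(\begin{smallmatrix}I_r & 0\\ 0 & R\end{smallmatrix}\right)$ with $R\in\R^{(n-r)\times(p-r)}$, then for any $v=(v_1,v_2)\in\R^r\times\R^{p-r}$ one has $\|Bv\|^2=\|v_1\|^2+\|Rv_2\|^2$, whence $\|B\|_{op}^2=\max\{1,\|R\|_{op}^2\}$. In particular, if $\|R\|_{op}\le 1$ then $\|B\|_{op}\le 1$; since the first $r$ diagonal entries of such a $B$ equal $1$, this gives $B\in\cT$, proving the inclusion $\supseteq$.

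For $\subseteq$, take $B\in\cT$, so $\|B\|_{op}\le 1$ and $b_{jj}=1$ for $j=1,\dots,r$ (which makes sense because $r\le p\le n$). Fix such a $j$. By \Cref{lem:Op} the $j$-th column of $B$ satisfies $\sum_{i=1}^n b_{ij}^2=\|Be_j\|^2\le\|B\|_{op}^2\le 1$; combined with $b_{jj}=1$ this forces $b_{ij}=0$ for all $i\ne j$. The same lemma bounds the Euclidean norm of the $j$-th row of $B$ by $\|B\|_{op}\le 1$, and $b_{jj}=1$ then forces $b_{ji}=0$ for all $i\ne j$. Hence for every $j\le r$ both the $j$-th row and the $j$-th column of $B$ coincide with those of the identity matrix, which means exactly that $B=\left(\begin{smallmatrix}I_r & 0\\ 0 & R\end{smallmatrix}\right)$, where $R\in\R^{(n-r)\times(p-r)}$ is the trailing block. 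By the block-diagonal norm identity above, $\|R\|_{op}\le\|B\|_{op}\le 1$, so $B$ lies in the right-hand set, completing the proof of equality.

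I do not anticipate any genuine obstacle here; the one spot that needs a line of care is deducing $\|R\|_{op}\le 1$ (monotonicity of the operator norm under passing to the trailing submatrix), and this is handled for free by the block-diagonal norm identity already established for the first inclusion rather than by invoking a separate submatrix-norm estimate.
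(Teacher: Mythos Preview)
Your proof is correct and follows essentially the same approach as the paper: both use \Cref{lem:Op} to force the first $r$ rows and columns to be standard unit vectors, and both invoke the block-diagonal operator-norm equivalence $\|R\|_{op}\le 1 \Leftrightarrow \bigl\|\bigl(\begin{smallmatrix}I_r & 0\\ 0 & R\end{smallmatrix}\bigr)\bigr\|_{op}\le 1$ to handle the remaining block. You simply spell out the details (the unit-vector argument and the explicit computation $\|B\|_{op}^2=\max\{1,\|R\|_{op}^2\}$) that the paper leaves implicit.
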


\begin{proof} Let $B\in\cT$. Then, by \Cref{lem:Op} and the fact that $ b_{ii}=1$ for all $i=1,\dots, r$, we find that $B=\left(\begin{smallmatrix}I_r & 0 \\ 0 &  R\end{smallmatrix}\right)$ for some $R\in \R^{(n-r)\times (p-r)}$. Now observe that
 \[
 \|R\|_{op}\leq 1 \IFF \left\|\left(\begin{smallmatrix}I_r & 0 \\ 0 &  R\end{smallmatrix}\right)\right\|_{op}\leq 1.
 \]
 This shows the desired equality.
\end{proof}

\begin{proposition}[Convex geometry of $\p\|\cdot\|_*(\bar X)$]\label{prop:SD1} Let $\bar X\in \R^{n\times p}$ with $r:=\rank \bar X$ and let 
$(\bar U,\bar V)\in O(n)\times O(p)$ such that 
\[
\bar U\diag(\sigma(\bar X))\bar V^T=\bar X.
\] 
Then the following hold:
\begin{itemize}
\item[(a)] $
\p \|\cdot\|_*(\bar X)=\bar U\set{\left(\begin{smallmatrix}I_r & 0 \\ 0 &  R\end{smallmatrix}\right)}{R \in \R^{(n-r)\times (p-r)},\;\|R\|_{op}\leq 1}\bar V^T;
$
\item[(b)] $\ri(\p\|\cdot\|_*(\bar X))=\bar U\set{\left(\begin{smallmatrix}I_r & 0 \\ 0 &  R\end{smallmatrix}\right)}{R \in \R^{(n-r)\times (p-r)},\;\|R\|_{op}<1}\bar V^T$;
\item[(c)] $\para (\p\|\cdot\|_*(\bar X))=\bar U\set{\left(\begin{smallmatrix}0 & 0 \\ 0 &  R\end{smallmatrix}\right)}{R\in \R^{(n-r)\times (p-r)}}\bar V^T$.

\end{itemize}
\end{proposition}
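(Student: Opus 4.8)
The plan is to reduce parts (a)--(c) to the ``diagonal'' case by orthogonal invariance, read off the shape of the subdifferential from \Cref{lem:SDAux}, and then obtain the relative interior and the parallel subspace from the standard calculus of $\ri$ and $\aff$ under affine maps.

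\emph{Part (a).} Write $\bar X=\bar UD\bar V^T$ with $D:=\diag(\sigma(\bar X))\in\R^{n\times p}$. Starting from \eqref{eq:SD1} and substituting $Z:=\bar U^TY\bar V$, orthogonal invariance \eqref{eq:OI} (which gives $\|Z\|_{op}=\|Y\|_{op}$ and $\|\bar X\|_*=\|D\|_*$) together with cyclicity of the trace (which gives $\ip{\bar X}{Y}=\ip{D}{Z}$) shows that $\p\|\cdot\|_*(\bar X)=\bar U\,\p\|\cdot\|_*(D)\,\bar V^T$, where $\p\|\cdot\|_*(D)=\set{Z}{\ip{D}{Z}=\|D\|_*,\ \|Z\|_{op}\leq 1}$ once more by \eqref{eq:SD1}. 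Since $\|D\|_*=\sum_{i=1}^r\sigma_i(\bar X)$, $\ip{D}{Z}=\sum_{i=1}^r\sigma_i(\bar X)z_{ii}$, and $\sigma_i(\bar X)>0$ for $i\leq r$, while \Cref{lem:Op} forces $z_{ii}\leq\|Z\|_{op}\leq 1$, the equality $\ip{D}{Z}=\|D\|_*$ holds under $\|Z\|_{op}\leq 1$ if and only if $z_{ii}=1$ for all $i=1,\dots,r$; that is, $\p\|\cdot\|_*(D)=\cT$ in the notation of \Cref{lem:SDAux}. Invoking \Cref{lem:SDAux} yields (a).

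\emph{Parts (b) and (c).} I would record (a) in the form $\p\|\cdot\|_*(\bar X)=\Phi(\iota(\bB_{op}))$, where $\bB_{op}=\set{R\in\R^{(n-r)\times(p-r)}}{\|R\|_{op}\leq 1}$ is the operator-norm unit ball, $\iota\colon R\mapsto\left(\begin{smallmatrix}I_r&0\\0&R\end{smallmatrix}\right)$ is an injective affine map with linear part $\iota_0\colon R\mapsto\left(\begin{smallmatrix}0&0\\0&R\end{smallmatrix}\right)$, and $\Phi\colon X\mapsto\bar UX\bar V^T$ is a linear bijection of $\R^{n\times p}$. As $\iota$ and $\Phi$ are affine and relative interior and affine hull commute with affine maps (see \cite[Chapter~6]{Roc 70}), we get $\ri\bigl(\p\|\cdot\|_*(\bar X)\bigr)=\Phi\bigl(\iota(\ri\bB_{op})\bigr)$ and $\para\bigl(\p\|\cdot\|_*(\bar X)\bigr)=\Phi\bigl(\iota_0(\para\bB_{op})\bigr)$. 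Since $\bB_{op}$ is the unit ball of a norm, it contains the origin in its interior, hence $\ri\bB_{op}=\inter\bB_{op}=\set{R}{\|R\|_{op}<1}$ and $\para\bB_{op}=\R^{(n-r)\times(p-r)}$; plugging these into the two identities gives (b) and (c), respectively.

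\emph{Where the work sits.} The only genuine computation is the identification $\p\|\cdot\|_*(D)=\cT$ in part (a): the nonobvious inclusion rests on combining $\sigma_i(\bar X)>0$ for $i\leq r$ with the bound $z_{ii}\leq\|Z\|_{op}\leq 1$ from \Cref{lem:Op}, so that $\sum_{i=1}^r\sigma_i(\bar X)z_{ii}=\sum_{i=1}^r\sigma_i(\bar X)$ can hold only when every $z_{ii}$ saturates its bound. One should also watch the degenerate cases $r=p$ and $r=n=p$, in which $R$ is an ``empty'' block and the conditions $\|R\|_{op}\leq 1$, $\|R\|_{op}<1$, and $\para\bB_{op}=\R^{(n-r)\times(p-r)}$ all hold trivially, so the stated formulas remain correct. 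Everything else is bookkeeping with orthogonal invariance \eqref{eq:OI} and the convex-analytic behaviour of $\ri$ and $\aff$ under affine maps.
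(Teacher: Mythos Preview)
Your proof is correct and follows essentially the same route as the paper's. The only cosmetic difference is that in part (a) you re-derive the identity $\p\|\cdot\|_*(\bar X)=\bar U\,\p\|\cdot\|_*(D)\,\bar V^T$ from \eqref{eq:SD1} by the orthogonal substitution $Z=\bar U^TY\bar V$, whereas the paper simply invokes the ready-made formula \eqref{eq:SD3}; from that point both arguments identify $\p\|\cdot\|_*(D)$ with $\cT$ and apply \Cref{lem:SDAux}, and for (b)--(c) you and the paper use the same affine-map device $\p\|\cdot\|_*(\bar X)=F(\bB_{op})$ together with \cite[Theorem~6.6]{Roc 70}.
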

\begin{proof} (a) From the characterization \eqref{eq:SD3}  we find that 
$
\bar U\p \|\cdot\|_*(\diag(\sigma(\bar X))\bar V^T.
$
In turn, by \eqref{eq:SD1},  we find that 
\[
\p \|\cdot\|_*(\diag(\sigma(\bar X))=\set{B\in \R^{n\times p}}{\ip{B}{\diag(\sigma(\bar X)}=\|\diag(\sigma(\bar X))\|_*,\;\|B\|_{op}\leq 1}.
\]
Now, observe that, by taking adjoints, 
$
\ip{B}{\diag(\sigma(\bar X)}=\ip{\Diag(B)}{\sigma(\bar x)},
$
and also $\|\diag(\sigma(\bar X))\|_*=\|\sigma(\bar X)\|_1$. Hence 
\[
\p \|\cdot\|_*(\diag(\sigma(\bar X))  =  \set{B\in \R^{n\times p}}{\Diag(B)\in \{1\}^r\times \R^{n-r},\; \|B\|_{op}\leq 1},
\]
and thus \Cref{lem:SDAux} gives the desired result.
\smallskip

\noindent
(b) Define $F:\R^{(n-r)\times(p-r)}\to \R^{n\times n}$ by 
$
F(R)=\bar U\left(\begin{smallmatrix}I_r & 0 \\ 0 &  R\end{smallmatrix}\right)\bar V^T. 
$
Then,  in view of (a), we find that $\p\|\cdot\|_*(\bar X)=F(\bB_{op})$. Therefore the desired formula follows from \cite[Theorem 6.6]{Roc 70}.
\smallskip

\noindent
(c) Follows immediately from (a) or  (b).
\end{proof}

\noindent
In the setting of \Cref{prop:SD1}, it follows immediately from part (c) that 
\begin{equation}\label{eq:Parallel}
\para (\p\|\cdot\|_*(\bar X))=\lin\set{u_iv_j^T}{i=r+1,\dots,n,\; j=r+1,\dots,p},
\end{equation}
where $u_i\;(i=1,\dots,n)$ and $v_j\;(j=1,\dots,p)$ are the columns of $\bar U$ and $\bar V$, respectively.

%
%
%

\section{Flatness of the nuclear norm and simultaneous polarizability} \label{sec:Flat}

\noindent
As before, we assume (w.l.o.g.) that $n\geq p$.
In this section we present our main results on the geometry of the nuclear norm sphere, specifically a characterization of the {\em flats}\footnote{Flats, in the context of Riemannian geometry, are (uncurved) Euclidean submanifolds.}.  We then leverage this to characterize the uniqueness of certain nuclear norm optimization problems.
The next definition is central to this analysis.
\begin{definition}[Polarizability]\label{def:Polar} 
Let  $X,\hat X\in \R^{n\times p}$. 
\begin{itemize}
  \item[(a)]   We say that $U\in \mathcal{V}_{n,p}$ {\em polarizes}\footnote{Sometimes this is also called the `angular' part of the polar decomposition.} $X$ if $XU^T\in \bS^n_+$.
  \item[(b)] We say that  $X$ and $\hat X$ are   {\em simultaneously polarizable}  if there exists a matrix $U\in \mathcal{V}_{n,p}$ that polarizes both $ X$ and $\hat X$.
\end{itemize}

\end{definition}

\noindent
A polarization in the sense of \Cref{def:Polar} (a) always exists as the following result shows, which is based on {\em polar decomposition}. 
Note that conventionally, for the case of the rectangular polar decomposition, the polarizing matrix $U$  usually appears on the small side of the matrix, see Horn and Johnson \cite[Theorem 7.3.1]{HoJ 13}.  In  \Cref{def:Polar}, we have placed it on the large side, but we observe that by padding, it is possible to conclude the existence of the large polarization as well.
\begin{proposition}[Existence of polarization]\label{prop:Polar} Let $X\in \R^{n\times p}$. Then there exists  $U\in \mathcal{V}_{n,p}$ that polarizes $X$.
\end{proposition}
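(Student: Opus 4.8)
The plan is to reduce the large (``tall'') polarization to the standard rectangular polar decomposition via the padding trick already exploited in \Cref{lem:Embed} and \Cref{lem:OI}. Recall that the standard rectangular polar decomposition (Horn--Johnson \cite[Theorem 7.3.1]{HoJ 13}), applied in the form suited to an $n\times p$ matrix with $n\geq p$, produces a factorization $X = H\,Q$ with $H\in\bS^n_+$ and $Q\in\mathcal{V}_{p,n}$ (i.e.\ $QQ^T = I_p$), the polarizing factor sitting on the ``small'' side. What we want instead is a matrix $U\in\mathcal{V}_{n,p}$, i.e.\ $U^TU=I_p$, with $XU^T\in\bS^n_+$. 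So the task is to convert a small-side factorization into a large-side one.

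First I would pass to the padded matrix $\widehat X := [X\;\;0]\in\R^{n\times n}$. For a square matrix the polar decomposition is symmetric in its roles: there exist $H\in\bS^n_+$ and an orthogonal $O\in O(n)$ with $\widehat X = H\,O$, equivalently $\widehat X\,O^T = H\in\bS^n_+$. Next I would take $U$ to be the matrix consisting of the first $p$ columns of $O^T$ (equivalently, the first $p$ rows of $O$, transposed). Since $O\in O(n)$, these $p$ columns are orthonormal, so $U\in\mathcal{V}_{n,p}$. It remains to check that $XU^T\in\bS^n_+$. Write $O^T = [U\;\;W]$ with $W\in\R^{n\times(n-p)}$; then
\[
\widehat X\,O^T = [X\;\;0]\begin{bmatrix} U^T\\ W^T\end{bmatrix}\cdot(\,\cdot\,)
\]
needs to be set up carefully: the cleaner route is to note $\widehat X = HO$ gives $[X\;\;0] = H\,O$, and reading off the first $p$ columns of this identity yields $X = H\,(O_{:,1:p})$ where $O_{:,1:p}$ is the $n\times p$ matrix of the first $p$ columns of $O$. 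Setting $U := O_{:,1:p}$, we have $U^TU = I_p$ (columns of an orthogonal matrix), hence $U\in\mathcal{V}_{n,p}$, and $XU^T = H\,U U^T$. This is not yet obviously in $\bS^n_+$, so the argument needs the complementary reading: from $\widehat X\,O^T\in\bS^n_+$ and the block structure of $O^T$'s columns, one gets $X\,(O^T)_{:,1:p}{}^{\!\!T} = H$ directly. Concretely, $H = \widehat X O^T$, and since the last $n-p$ columns of $\widehat X=[X\;0]$ are zero, $\widehat X O^T = X\cdot(\text{first }p\text{ rows of }O^T)$; calling that row block $U^T$ (so $U\in\mathcal V_{n,p}$ because $O^T$'s rows are orthonormal), we obtain $XU^T = H\in\bS^n_+$, which is exactly the claim.

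The only subtle point — and the step I would be most careful about — is keeping the roles of rows versus columns of $O$ straight so that the padded-zero block genuinely kills the right part of the product and leaves $XU^T$ equal to the PSD factor $H$; this is purely bookkeeping but is the place an error would creep in. Everything else (existence of the square polar decomposition, orthonormality of a subset of columns/rows of an orthogonal matrix, and $\bS^n_+$-membership being inherited verbatim since the product equals $H$ on the nose rather than merely being congruent to it) is immediate. I would write the block computation out once, explicitly, and then conclude.
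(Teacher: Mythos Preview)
Your proposal is correct and follows essentially the same route as the paper: pad $X$ to the square matrix $[X\;0]$, apply the square polar decomposition $[X\;0]=SQ$ with $S\in\bS^n_+$ and $Q\in O(n)$, take $U$ to be the first $p$ columns of $Q$, and read off $XU^T=[X\;0]Q^T=S\succeq 0$. Your write-up contains a couple of false starts (the attempt via $X=HU\Rightarrow XU^T=HUU^T$ is indeed a dead end, as you note), but the final block computation you land on is exactly the paper's argument; in a clean version you can go straight to partitioning $Q=[U\;W]$ and computing $[X\;0]\begin{pmatrix}U^T\\W^T\end{pmatrix}=XU^T$.
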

\begin{proof} Consider the augmented matrix $[X\;  0]\in \R^{n\times n}$. By polar decomposition, see, e.g.,  \cite[Theorem 7.3.1]{HoJ 13}, there exists $Q\in O(n)$ and $S\in \bS^n_{+}$ such that  $[X\; 0] =S Q$. Now, partition $Q=[U\; W]$ according to $[X\;0]$. Then 
\[
XU^T=[X\;0] \cdot \left[ \begin{matrix}U^T\\W^T
\end{matrix}\right]
=S\succeq 0,
\]
and, by construction, $U$ has orthonormal columns.
\end{proof}

\noindent
Polarizability can be expressed in terms of the subdifferential of the nuclear norm.

\begin{lemma}\label{lem:Polar} Let $X\in \R^{n\times p}$ and let $U\in \mathcal{V}_{n,p}$.
 The following are equivalent:
\begin{itemize}
\item[(i)] $U\in \p \|\cdot\|_*(X)$;
\item[(ii)] $\ip{U}{X}=\|X\|_*$;
\item[(iii)]  $U$ polarizes $X$, i.e. $XU^T\in\bS^n_+$.
\end{itemize}
\end{lemma}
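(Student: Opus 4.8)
The plan is to prove the chain of equivalences (i) $\Leftrightarrow$ (ii) $\Leftrightarrow$ (iii) for a matrix $U$ that is already assumed to lie in the Stiefel manifold $\mathcal{V}_{n,p}$; this Stiefel hypothesis is what makes the statement true and clean, so I will use it repeatedly. First I would dispatch (i) $\Leftrightarrow$ (ii). By \Cref{prop:SD}(b), equation \eqref{eq:SD1}, membership $U\in\p\|\cdot\|_*(X)$ is equivalent to the two conditions $\ip{U}{X}=\|X\|_*$ and $\|U\|_{op}\le 1$. But since $U\in\mathcal{V}_{n,p}$ satisfies $U^TU=I_p$, all its singular values equal $1$, so $\|U\|_{op}=1\le 1$ automatically. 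Hence the operator-norm constraint is vacuous, and (i) collapses to exactly (ii).

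Next I would handle (ii) $\Leftrightarrow$ (iii). For the direction (iii) $\Rightarrow$ (ii): if $XU^T\in\bS^n_+$, then using \eqref{eq:SPD} (nuclear norm equals trace on the PSD cone) together with \Cref{lem:OI} (right-multiplication by $U\in\mathcal{V}_{n,p}$ preserves the nuclear norm), we get
\[
\ip{U}{X}=\tr(U^TX)=\tr(XU^T)=\|XU^T\|_*=\|X\|_*,
\]
using cyclicity of the trace and $\|X\|_*=\|XU^T\|_*$. For the converse (ii) $\Rightarrow$ (iii), suppose $\ip{U}{X}=\|X\|_*$. Write $M:=XU^T\in\R^{n\times n}$. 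Then $\ip{U}{X}=\tr(XU^T)=\tr M$, while $\|X\|_*=\|XU^T\|_*=\|M\|_*$ again by \Cref{lem:OI}. So the hypothesis says $\tr M=\|M\|_*$, i.e. $\ip{M}{I_n}=\|M\|_*=\|I_n\|_{op}\cdot\|M\|_*$, which is the equality case in the trace–nuclear-norm duality $\ip{M}{I_n}\le\|I_n\|_{op}\|M\|_*=\|M\|_*$. This forces $I_n\in\p\|\cdot\|_*(M)$ (by \eqref{eq:SD1}), and hence by \Cref{prop:SD}(a) the matrices $M$ and $I_n$ have a simultaneous SVD; but the SVD of $I_n$ is $I_n\diag(\mathbf 1)I_n^T$, so $M=\bar U\diag(\sigma(M))\bar U^T$ for some $\bar U\in O(n)$ — that is, $M$ is symmetric positive semidefinite, which is exactly (iii). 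Alternatively, and perhaps more transparently, one can invoke von Neumann's trace inequality (\Cref{th:Neumann}) directly: $\tr M=\ip{M}{I_n}\le\ip{\sigma(M)}{\sigma(I_n)}=\sum_i\sigma_i(M)=\|M\|_*$ with equality iff $M$ and $I_n$ are simultaneously singular-value decomposable, again yielding $M\in\bS^n_+$.

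The main obstacle — really the only nontrivial point — is the implication (ii) $\Rightarrow$ (iii), specifically extracting positive semidefiniteness of $XU^T$ from the scalar equality $\tr(XU^T)=\|XU^T\|_*$. The clean way around it is to recognize that $\tr M=\|M\|_*$ means $M$ achieves equality in the duality pairing with $I_n$, and then to read off from von Neumann's theorem (or equivalently the subdifferential characterization in \Cref{prop:SD}) that $M$ must be diagonalized by a single orthogonal matrix with nonnegative spectrum, i.e. $M\in\bS^n_+$. One subtlety to state carefully is that $XU^T$ is genuinely square ($n\times n$) — it is not rectangular — which is why it makes sense to compare it with $I_n$ and to speak of it being in $\bS^n_+$; this is precisely why \Cref{def:Polar} placed $U$ on the large side. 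Everything else is bookkeeping with the trace and the two invariance lemmas, so the proof should be short.
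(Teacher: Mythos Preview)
Your proof is correct and uses the same toolkit as the paper (the subdifferential characterization \eqref{eq:SD1}, von Neumann's \Cref{th:Neumann}, the identity \eqref{eq:SPD}, and \Cref{lem:OI}), but the route through (ii)$\Rightarrow$(iii) is genuinely different. The paper proves the cycle (i)$\Rightarrow$(ii)$\Rightarrow$(iii)$\Rightarrow$(i); for (ii)$\Rightarrow$(iii) it applies von Neumann directly to the pair $X,U\in\R^{n\times p}$, obtains a simultaneous SVD $X=\bar U\diag(\sigma(X))\bar V^T$, $U=\bar U\left(\begin{smallmatrix}I_p\\0\end{smallmatrix}\right)\bar V^T$, and then computes $XU^T$ explicitly as a block product to see it is PSD. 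You instead first pass to the square matrix $M=XU^T$ via \Cref{lem:OI}, reduce the hypothesis to $\tr M=\|M\|_*$, and apply von Neumann to the pair $M,I_n\in\R^{n\times n}$; the simultaneous SVD then forces $\bar V=\bar U$ (since $I_n=\bar U\bar V^T$), giving $M\in\bS^n_+$ without any block computation. Your approach buys a slightly cleaner endgame at the cost of invoking \Cref{lem:OI} one extra time; the paper's approach avoids that lemma in this implication but pays with an explicit matrix calculation. One wording nitpick: rather than saying ``the SVD of $I_n$ is $I_n\diag(\mathbf 1)I_n^T$'' (which suggests uniqueness), you should argue that any SVD of $I_n$ satisfies $\bar U\bar V^T=I_n$, hence $\bar U=\bar V$.
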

\begin{proof} (i)$\Rightarrow$(ii): By   the  subdifferential representation   of $\|\cdot\|_*$ in \eqref{eq:SD1}.
\smallskip

\noindent
(ii)$\Rightarrow$(iii):  Observe that $U^TU=I_p$. In particular,  $\sigma(U)=[1,\dots, 1]^T\in \R^p$. By assumption,  we hence have 
$
\ip{U}{X}=\|X\|_*=\ip{\sigma(U)}{\sigma(X)}.
$
By (von Neumann's)  \Cref{th:Neumann}, we thus find $\bar U\in O(n), \bar V\in O(p)$ such that $X=\bar U\diag(\sigma(X))\bar V^T$ and  $U=\bar U\left(\begin{smallmatrix}I_p & 0 \\ 0 &  0\end{smallmatrix}\right) \bar V^T$. Consequently 
\[
XU^T=\bar U\left(\begin{smallmatrix}\diag(\sigma(\bar X)) & 0 \\ 0 &  0\end{smallmatrix}\right)\bar V^T\bar V\left(\begin{smallmatrix}I_p & 0 \\ 0 &  0\end{smallmatrix}\right)\bar U^T=\bar U\left(\begin{smallmatrix}\diag(\sigma(\bar X)) & 0 \\ 0 &  0\end{smallmatrix}\right)\bar U^T\succeq 0.
\]
(iii)$\Rightarrow$(i): Extend $U$ to a an orthonormal matrix $[U\; W]\in  O(n)$. Then
\[
\tr(XU^T)=\|XU^T\|_*=\|X\|_*,
\]
where the first identity employs the assumption that $XU^T\in \bS^n_+$ combined with  \eqref{eq:SPD} and the second one is due to \Cref{lem:OI}.  Since $\|U\|_{op}=1$ (as $U^TU=I_p$), the desired statement follows from \Cref{prop:SD}(b).
\end{proof}

\noindent
We now present our first main result which characterizes the existence of (proper) line segments in the nuclear norm sphere.

\begin{theorem}[Flats in the nuclear norm sphere]\label{th:FlatNN}  Let $\bar X,\hat X\in \R^{n\times p}$ and define
\[
X(t):=\bar X+t(\hat X-\bar X)\quad \forall t\in[0,1]. 
\]
Then the following are equivalent:
\begin{itemize}
\item[(i)]  $\|X(t)\|_*=\|X\|_*$ for all $t\in [0,1]$.
\item[(ii)] $\hat X$ and $\bar X$ are simultaneously polarizable and $\|\bar X\|_*=\|\hat X\|_*$.

\end{itemize}

\end{theorem}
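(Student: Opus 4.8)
The plan is to establish the two implications separately, using \Cref{lem:Polar} as the main tool in both: it converts the algebraic polarization condition $XU^T\in\bS^n_+$ into the analytic tightness condition $\ip{U}{X}=\|X\|_*$ for $U\in\cV_{n,p}$. First I would record that the norm equality in (ii), $\|\bar X\|_*=\|\hat X\|_*$, is already contained in (i): it is just (i) evaluated at the endpoints $t=0$ and $t=1$, since $X(0)=\bar X$ and $X(1)=\hat X$. So the real substance is the equivalence between constancy of $t\mapsto\|X(t)\|_*$ on $[0,1]$ and simultaneous polarizability.

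For (ii)$\Rightarrow$(i): take $U\in\cV_{n,p}$ polarizing both $\bar X$ and $\hat X$, i.e.\ $\bar XU^T,\hat XU^T\in\bS^n_+$. For each $t\in[0,1]$ the matrix $X(t)U^T=(1-t)\,\bar XU^T+t\,\hat XU^T$ is a convex combination of positive semidefinite matrices, hence lies in $\bS^n_+$, so $U$ polarizes $X(t)$. By \Cref{lem:Polar}, $\|X(t)\|_*=\ip{U}{X(t)}=(1-t)\ip{U}{\bar X}+t\ip{U}{\hat X}=(1-t)\|\bar X\|_*+t\|\hat X\|_*$, which equals $\|\bar X\|_*$ for every $t$ once we invoke $\|\bar X\|_*=\|\hat X\|_*$.

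For (i)$\Rightarrow$(ii): the key idea is to polarize the midpoint (equivalently the sum $\bar X+\hat X$) \emph{once} and show that this single Stiefel matrix must polarize both endpoints. By \Cref{prop:Polar} there is $U\in\cV_{n,p}$ with $(\bar X+\hat X)U^T\in\bS^n_+$, so \Cref{lem:Polar} gives $\ip{U}{\bar X+\hat X}=\|\bar X+\hat X\|_*$. Now (i) at $t=\tfrac12$, together with $\|\bar X\|_*=\|\hat X\|_*$, yields $\|\bar X+\hat X\|_*=2\|X(\tfrac12)\|_*=2\|\bar X\|_*=\|\bar X\|_*+\|\hat X\|_*$. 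On the other hand $\|U\|_{op}=1$ (since $U^TU=I_p$), so $U\in\bB_{op}$ and hence $\ip{U}{\bar X}\le\|\bar X\|_*$ and $\ip{U}{\hat X}\le\|\hat X\|_*$; as the two left-hand sides sum to $\ip{U}{\bar X+\hat X}=\|\bar X\|_*+\|\hat X\|_*$, both inequalities are in fact equalities. Thus $\ip{U}{\bar X}=\|\bar X\|_*$ and $\ip{U}{\hat X}=\|\hat X\|_*$, and \Cref{lem:Polar} shows the same $U$ polarizes both $\bar X$ and $\hat X$, i.e.\ they are simultaneously polarizable.

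I do not expect a genuine obstacle: the one real idea is that polarizing the midpoint suffices, and it rests on the rigidity of the inequality $\ip{U}{X}\le\|X\|_*$ for $U\in\bB_{op}$ — equality at the sum forces equality at each summand. An alternative route would be to extract a common subgradient $Y\in\p\|\cdot\|_*(\bar X)\cap\p\|\cdot\|_*(\hat X)$ from any subgradient of $\|\cdot\|_*$ at the midpoint (same tightness argument, now with $\|Y\|_{op}\le1$), but one would then still have to promote $Y$ to a Stiefel matrix using the structure in \Cref{prop:SD1}; the polar-decomposition argument above avoids that and is the cleaner path.
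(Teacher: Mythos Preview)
Your argument is correct. The direction (ii)$\Rightarrow$(i) is essentially identical to the paper's, so the only point of comparison is (i)$\Rightarrow$(ii).

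There the paper takes a more convex-analytic route: it views $t\mapsto\|X(t)\|_*$ as a convex function constant on $[0,1]$, applies the subdifferential chain rule to obtain $\ip{\hat X-\bar X}{Y}=0$ for every $Y\in\p\|\cdot\|_*(X(t))$ with $t\in(0,1)$, and combines this with $\ip{X(t)}{Y}=\|X(t)\|_*$ to deduce the inclusion $\p\|\cdot\|_*(X(t))\subset\p\|\cdot\|_*(\bar X)\cap\p\|\cdot\|_*(\hat X)$. A polarizing matrix $U_t$ for $X(t)$ then lies in this intersection and hence polarizes both endpoints. Your argument bypasses the chain rule entirely: you polarize the midpoint once, observe that (i) forces the triangle inequality $\|\bar X+\hat X\|_*\le\|\bar X\|_*+\|\hat X\|_*$ to be tight, and split the single equality $\ip{U}{\bar X+\hat X}=\|\bar X\|_*+\|\hat X\|_*$ into two via the duality bounds $\ip{U}{\bar X}\le\|\bar X\|_*$, $\ip{U}{\hat X}\le\|\hat X\|_*$. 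This is more elementary and in fact only consumes (i) at $t\in\{0,\tfrac12,1\}$ (which, by convexity of $t\mapsto\|X(t)\|_*$, is equivalent to the full hypothesis). The paper's argument, on the other hand, yields the stronger intermediate statement that the \emph{entire} subdifferential at any interior point sits in the intersection of the endpoint subdifferentials, which is of independent interest for the later analysis.
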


\begin{proof} Note that there is nothing to prove if $\bar X=\hat X$. So we assume the contrary from now on.
\smallskip

\noindent
(i)$\Rightarrow$(ii): By assumption,  the convex function $f:\R\to \R,$
$
f(t)=\|X(t))\|_*, 
$ 
is constant on $[0,1]$.  Hence, by the (subdifferential) chain rule \cite[Theorem 23.8]{Roc 70}, we have 
\[
\{0\}=\{f'(t)\}=\set{\ip{\hat X-\bar X}{Y}}{Y\in \p\|\cdot\|_*(X(t))}\quad\forall t\in (0,1),
\]
i.e.
\begin{equation}\label{eq:Chain}
\ip{\hat X-\bar X}{Y}=0\quad\forall Y\in \p\|\cdot\|_*(X(t)),\; t\in (0,1).
\end{equation}
Now, for any $t\in (0,1)$ and any $Y\in \p\|\cdot\|_*(X(t))$, we have 
\begin{equation}\label{eq:Xt}
\ip{t\hat X+(1-t)\bar X}{Y}=\ip{X(t)}{Y}=\|X(t)\|_*=\|\bar X\|_*.
\end{equation}
Multiplying \eqref{eq:Chain} by $-t$ and adding  to \eqref{eq:Xt} then yields
\[
\ip{\bar X}{Y}=\|\bar X\|_* \quad \forall Y\in \p\|\cdot\|_*(X(t)),\; t\in (0,1),
\]
hence 
\begin{equation}\label{eq:XbarCont}
\p \|\cdot\|_*(X(t))\subset \p \|\cdot\|_*(\bar X)\quad \forall t\in(0,1).
\end{equation}
Similarly,  multiplying  \eqref{eq:Chain} by $(1-t)$ and adding  to \eqref{eq:Xt} ultimately yields
\begin{equation}\label{eq:XhatCont}
\p \|\cdot\|_*(X(t))\subset \p \|\cdot\|_*(\hat X)\quad \forall t\in(0,1).
\end{equation}
Combining  \eqref{eq:XbarCont} and \eqref{eq:XhatCont} we thus find 
\begin{equation}\label{eq:Inter}
\p \|\cdot\|_*(X(t))\subset \p \|\cdot\|_*(\hat X)\cap \p \|\cdot\|_*(\bar X)\quad \forall t\in(0,1).
\end{equation}
Now, for $t\in (0,1)$, set $X_t:=X(t)$. Choose $U_t\in \R^{n\times p}$ that polarizes  $X_t$ by means of \Cref{prop:Polar}. Then, by \Cref{lem:Polar} , we have  $U_t\in \p \|\cdot\|_*(X_t)$, and consequently, by \eqref{eq:Inter}, we find $U_t\in  \p \|\cdot\|_*(\hat X)\cap \p \|\cdot\|_*(\bar X)$. Therefore, we find
\[
\ip{\bar X}{U_t}=\|\bar X\|_*\AND \ip{\hat X}{U_t}=\|\hat X\|_*.
\]
By \Cref{lem:Polar}  we thus infer that $U_t$ polarizes both $\bar X$ and $\hat X$.
\smallskip

\noindent
(ii) $\Rightarrow$ (i): Let $U\in \R^{n\times p}$  polarize $\bar X$ and $\hat X$. Consequently,  $U$ polarizes $X(t)$, and hence, by \Cref{lem:Polar}, $U\in \p\|\cdot\|_*(X(t))$  for all $t\in [0,1]$. Therefore
\[
\|X(t)\|_*=\tr(X(t)U^T)=t\cdot \tr(\bar XU^T)+(1-t)\cdot\tr(\hat XU^T)=\|\bar X\|_*.
\] 
Here,   the last identity uses that $ \tr(\bar XU^T)=\|\bar X\|_*=\|\hat X\|_*= \tr(\hat XU^T)$ as $U$ polarizes both $\bar X$ and $\hat X$ which  have the same nuclear norm (by assumption).
\end{proof}  

\vspace{0.5cm}

\noindent
An immediate consequence is the following corollary.

\begin{corollary}\label{cor:FlatNN1} For  $\bar X,\hat X\in \R^{n\times p}$ and $\cA\in \cL(\R^{n\times p}, \bE)$ the following are equivalent:
\begin{itemize}
\item[(i)] $\|\cdot\|_*$ and $\cA$ are constant on the line segment $[\bar X,\hat X]$.
\item[(ii)] $\hat X-\bar X\in \ker \cA$, $\|\bar X\|_*=\|\hat X\|_*$ and $\hat X$ and $\bar X$ are simultaneously polarizable.

\end{itemize}
\end{corollary}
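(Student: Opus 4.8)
The plan is to deduce \Cref{cor:FlatNN1} directly from \Cref{th:FlatNN} by handling the linear map $\cA$ separately from the nuclear norm. The statement should reduce to the observation that a \emph{linear} (affine) function is constant on a segment $[\bar X,\hat X]$ if and only if it is constant on the line through $\bar X$ and $\hat X$, which, since $\cA$ is linear, is equivalent to $\cA(\hat X-\bar X)=0$, i.e. $\hat X-\bar X\in\ker\cA$. The $\|\cdot\|_*$-part of condition (i) is, verbatim, condition (i) of \Cref{th:FlatNN}, so it translates to ``$\|\bar X\|_*=\|\hat X\|_*$ and $\bar X,\hat X$ are simultaneously polarizable'' by that theorem. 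Conjoining the two pieces yields the equivalence.

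Concretely, I would argue as follows. \textbf{(i)$\Rightarrow$(ii):} If $\|\cdot\|_*$ is constant on $[\bar X,\hat X]$, then \Cref{th:FlatNN}(i) holds, so by \Cref{th:FlatNN} we get $\|\bar X\|_*=\|\hat X\|_*$ and simultaneous polarizability. If in addition $\cA$ is constant on $[\bar X,\hat X]$, then in particular $\cA(\bar X)=\cA(\hat X)$, whence $\cA(\hat X-\bar X)=0$ by linearity, i.e. $\hat X-\bar X\in\ker\cA$. \textbf{(ii)$\Rightarrow$(i):} Given $\|\bar X\|_*=\|\hat X\|_*$ and simultaneous polarizability, \Cref{th:FlatNN} gives $\|X(t)\|_*=\|\bar X\|_*$ for all $t\in[0,1]$, where $X(t)=\bar X+t(\hat X-\bar X)$; thus $\|\cdot\|_*$ is constant on $[\bar X,\hat X]$. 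For $\cA$: since $\hat X-\bar X\in\ker\cA$, linearity gives $\cA(X(t))=\cA(\bar X)+t\,\cA(\hat X-\bar X)=\cA(\bar X)$ for all $t$, so $\cA$ is constant on the segment as well.

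There is essentially no obstacle here — the corollary is a bookkeeping combination of \Cref{th:FlatNN} with the triviality that linear maps are affine along segments. The only point requiring a word of care is the degenerate case $\bar X=\hat X$, where both conditions hold vacuously (the segment is a point); this matches the convention already adopted in the proof of \Cref{th:FlatNN}, so no separate treatment is needed beyond noting it. I would keep the write-up to a few lines, explicitly invoking \Cref{th:FlatNN} for the norm part and linearity of $\cA$ for the kernel part in each direction.
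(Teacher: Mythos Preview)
Your proposal is correct and matches the paper's approach: the paper states this corollary as ``an immediate consequence'' of \Cref{th:FlatNN} without giving a separate proof, and your argument—invoking \Cref{th:FlatNN} for the nuclear norm part and linearity of $\cA$ for the kernel condition—is exactly the intended reasoning.
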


\noindent
The previous result, while geometrically elegant, is potentially difficult to evaluate.  By working with  the singular value decomposition  of a the base point  $\bar X$, one can further specify exactly the set of directions which should not be contained in  the kernel of the ambient linear operator $\cA$.
To state this result, we use the following notation for some $r\in \{1,\dots,n\}$:
\[
\cS_{++}^r \coloneqq \set{\left(\begin{smallmatrix}A& 0\\ 0 & 0\end{smallmatrix}\right)\in \bS^n_+}{A\in \bS^r_{++}}\subset \bS^n_+.
\]

\begin{corollary}\label{cor:Equiv}
 Let $\bar X\in \R^{n \times p}$ and let $r:=\rank \bar X$.  Let there be posed a singular value decomposition $\bar X=\bar U\diag(\sigma(\bar X))\bar V^T$ and let  $\cA\in \cL(\R^{n\times p}, \bE)$. Set 
\[
W(\bar X) \coloneqq
\left\{
  \bar UM\left(\begin{smallmatrix}I_r & 0\\ 0 & R\end{smallmatrix}\right)\bar V^T
  \;
\middle\vert
\;
\begin{aligned}
&M\in \bS^n_+-\cS^r_{++}, \; \tr(M)=0,\\
&R \in \mathcal{V}_{n-r,p-r}, \;M\left(\begin{smallmatrix}I_r & 0\\ 0 & RR^T\end{smallmatrix}\right)=M
\end{aligned}
\right\}.
\]
See \Cref{sec:W} for a discussion of $W$.
The following are equivalent:
\begin{itemize}
\item[(i)] $\cX:=\set{X\in \R^{n\times n}}{\cA(X)=\cA(\bar X),\; \|X\|_*=\|\bar X\|_*}$ does not contain a proper\footnote{A line segment that is not just $\{\bar X\}$.} line segment including $\bar X$.
\item[(ii)] $\ker \cA\cap W(\bar X)=\{0\}$.

\end{itemize}

\end{corollary}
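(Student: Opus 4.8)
The plan is to reduce this to \Cref{th:FlatNN} (equivalently \Cref{cor:FlatNN1}) by unwinding what simultaneous polarizability of $\bar X$ with a second point $\hat X = \bar X + D$ means in terms of the direction $D$, using the SVD of $\bar X$. The logical skeleton is: a proper line segment through $\bar X$ inside $\cX$ exists $\iff$ there is a nonzero $D \in \ker\cA$ such that $\bar X$ and $\bar X + D$ are simultaneously polarizable and $\|\bar X + D\|_* = \|\bar X\|_*$ (this is exactly \Cref{cor:FlatNN1} applied with $\hat X = \bar X + D$, noting that constancy of the affine function $\cA$ on the segment is equivalent to $D \in \ker\cA$ and constancy of $\|\cdot\|_*$ on the segment is, by convexity, equivalent to $\|\bar X\|_* = \|\bar X + D\|_*$). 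So it suffices to prove: \emph{the set of directions $D$ for which $\bar X + D$ and $\bar X$ are simultaneously polarizable with equal nuclear norm is precisely $W(\bar X)$.} Then (i) fails $\iff$ $\ker\cA$ meets $W(\bar X)\setminus\{0\}$, which is (ii)'s negation, giving the equivalence.

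First I would normalize via orthogonal invariance: by \Cref{prop:SD}(b) and \Cref{lem:Polar}, $U \in \cV_{n,p}$ polarizes $\bar X$ iff $U \in \p\|\cdot\|_*(\bar X)$, and by \Cref{prop:SD1}(a) these are exactly the matrices $U = \bar U\left(\begin{smallmatrix}I_r & 0\\ 0 & R\end{smallmatrix}\right)\bar V^T$ with $R \in \cV_{n-r,p-r}$ (the Stiefel constraint, not just $\|R\|_{op}\le 1$, because $U$ must itself lie in the Stiefel manifold $\cV_{n,p}$, i.e.\ have orthonormal columns — this pins down $R^TR = I_{p-r}$). Fix such a $U$. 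Now $U$ simultaneously polarizes $\bar X$ and $\hat X = \bar X + D$ iff additionally $\hat X U^T \in \bS^n_+$, i.e.\ $\bar X U^T + D U^T \succeq 0$. Compute $\bar X U^T = \bar U\,\diag(\sigma(\bar X))\left(\begin{smallmatrix}I_r & 0\\ 0 & RR^T\end{smallmatrix}\right)\bar U^T = \bar U\left(\begin{smallmatrix}\Sigma_r & 0\\ 0 & 0\end{smallmatrix}\right)\bar U^T$ where $\Sigma_r = \diag(\sigma_1(\bar X),\dots,\sigma_r(\bar X)) \succ 0$ — so $\bar X U^T \in \cS^r_{++}$ after conjugating by $\bar U$. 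Writing $D = \bar U\, \tilde D\, \bar V^T$ and $M := \bar U^T(\hat X U^T)\bar U - \bar U^T(\bar X U^T)\bar U$ wait — better to set $M$ directly as the conjugate of $D U^T$. Then $\hat X U^T \succeq 0$ becomes: $\bar U^T(\bar X U^T + DU^T)\bar U \succeq 0$, and since the first summand is in $\cS^r_{++}$, this says exactly that $M := \bar U^T(DU^T)\bar U \in \bS^n_+ - \cS^r_{++}$. This accounts for the membership constraint $M \in \bS^n_+ - \cS^r_{++}$ in the definition of $W$. Next, $D U^T = \bar U M \bar U^T$ must be consistent with $D = \bar U \tilde D \bar V^T$, i.e.\ $\bar U\tilde D\bar V^T = \bar U M \bar U^T$, so $\tilde D = M \bar U^T \bar V$... no: $D = (DU^T)(U^{T})^{\dagger}$-type reasoning is awkward since $U$ isn't square. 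Instead note $UU^T = \bar U\left(\begin{smallmatrix}I_r & 0\\ 0 & RR^T\end{smallmatrix}\right)\bar U^T$ is the orthogonal projector onto $\rge U$; from $D = \bar U\tilde D \bar V^T$ one has $DU^T\,U = D\,(U^TU) $ wait $U^TU = I_p$ so $DU^TU = D$? No: $D \in \R^{n\times p}$, $U \in \R^{n\times p}$, $U^T U = I_p$, but $U^T$ is $p\times n$ so $DU^T$ is... dimension mismatch unless $\bar X \in \R^{n\times n}$. Here $\cX \subset \R^{n\times n}$, so we are in the square padded setting: $p$ is replaced by $n$, $\bar X, D, U \in \R^{n\times n}$, $U \in O(n)$, $R \in \cV_{n-r,n-r} = O(n-r)$. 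Good — so $DU^T U = D$ and $D = \bar U M \bar U^T U = \bar U M \bar U^T \bar U \left(\begin{smallmatrix}I_r & 0\\ 0 & R\end{smallmatrix}\right)\bar V^T = \bar U M \left(\begin{smallmatrix}I_r & 0\\ 0 & R\end{smallmatrix}\right)\bar V^T$, matching the outer form of $W(\bar X)$. Conversely, from $D$ of this form one recovers $DU^T = \bar U M \left(\begin{smallmatrix}I_r & 0\\ 0 & R\end{smallmatrix}\right)\left(\begin{smallmatrix}I_r & 0\\ 0 & R^T\end{smallmatrix}\right)\bar U^T = \bar U M \left(\begin{smallmatrix}I_r & 0\\ 0 & RR^T\end{smallmatrix}\right)\bar U^T$, and the constraint $M\left(\begin{smallmatrix}I_r & 0\\ 0 & RR^T\end{smallmatrix}\right) = M$ in $W$ is precisely what is needed to make this equal $\bar U M \bar U^T$, i.e.\ a symmetric matrix, so that $\hat X U^T = \bar X U^T + \bar U M \bar U^T$ is symmetric and the $\succeq 0$ condition makes sense and is equivalent to $M \in \bS^n_+ - \cS^r_{++}$.

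Finally the nuclear-norm equality: once $U$ simultaneously polarizes $\bar X$ and $\hat X$, \Cref{lem:Polar} and \eqref{eq:SPD} give $\|\hat X\|_* = \tr(\hat X U^T) = \tr(\bar X U^T) + \tr(DU^T) = \|\bar X\|_* + \tr(\bar U M \bar U^T) = \|\bar X\|_* + \tr(M)$, so $\|\hat X\|_* = \|\bar X\|_*$ is equivalent to $\tr M = 0$, the last constraint in $W(\bar X)$. Assembling: $D$ arises from a simultaneously-polarizing $U$ with equal nuclear norms $\iff$ $D \in W(\bar X)$, as claimed; combined with \Cref{cor:FlatNN1} this yields the stated equivalence, since a proper segment in $\cX$ through $\bar X$ corresponds to a direction $D = \hat X - \bar X \ne 0$ with $D \in \ker\cA$ and $D \in W(\bar X)$ — actually one should also observe the segment $[\bar X, \bar X+D] \subset \cX$ requires only constancy, and by convexity of $\|\cdot\|_*$ constancy of the endpoints' norms forces constancy throughout, so \Cref{cor:FlatNN1}(ii) indeed matches. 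I expect the main obstacle to be bookkeeping the two-sided-versus-one-sided Stiefel/orthogonality constraints cleanly (i.e.\ carefully justifying that $R$ must be orthogonal, that $M$ is symmetric exactly under the displayed side condition, and that nothing is lost in passing between $D$ and $DU^T$), rather than any deep new idea — the conceptual content is entirely carried by \Cref{th:FlatNN} and \Cref{prop:SD1}.
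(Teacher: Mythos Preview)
Your overall route is exactly the paper's: reduce to \Cref{cor:FlatNN1}, parametrize all common polarizers of $\bar X$ via \Cref{prop:SD1}(a) as $U=\bar U\bigl(\begin{smallmatrix}I_r&0\\0&R\end{smallmatrix}\bigr)\bar V^T$ with $R\in\cV_{n-r,p-r}$, and then translate ``$U$ also polarizes $\hat X$ and $\|\hat X\|_*=\|\bar X\|_*$'' into the three constraints on $M$. The forward direction (failure of (i) $\Rightarrow$ failure of (ii)) is fine and matches the paper. Also, your dimension panic is unnecessary: with $D\in\R^{n\times p}$ and $U\in\cV_{n,p}$ one has $(DU^T)U=D(U^TU)=D$, so the recovery $D=\bar U M\bigl(\begin{smallmatrix}I_r&0\\0&R\end{smallmatrix}\bigr)\bar V^T$ works in the rectangular case as well.

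There is, however, a genuine gap in your converse (failure of (ii) $\Rightarrow$ failure of (i)). You assert that, given $D\in W(\bar X)$, the condition ``$\hat XU^T\succeq 0$'' is \emph{equivalent} to $M\in\bS^n_+-\cS^r_{++}$. Only one implication is true: if $\bigl(\begin{smallmatrix}\Sigma_r&0\\0&0\end{smallmatrix}\bigr)+M\succeq 0$ then $M\in\bS^n_+-\cS^r_{++}$, but not conversely. For instance, with $n=p=r=2$, $\bar X=I_2$, the only polarizer of $\bar X$ is $U=I_2$; yet $M=\bigl(\begin{smallmatrix}10&0\\0&-10\end{smallmatrix}\bigr)\in\bS^2_+-\cS^2_{++}$ with $\tr M=0$ gives $\hat X=I_2+M\not\succeq 0$. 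So $W(\bar X)$ is \emph{not} the set of $D$ with $\bar X+D$ simultaneously polarizable at unit step. What is true---and what the paper uses---is that for every nonzero $D\in W(\bar X)$ there exists $\hat\varepsilon>0$ with $\bigl(\begin{smallmatrix}\Sigma_r&0\\0&0\end{smallmatrix}\bigr)+\hat\varepsilon M\succeq 0$ (write $M=P-Q$ with $P\succeq 0$, $Q\in\cS^r_{++}$, and take $\hat\varepsilon$ small enough that $\Sigma_r-\hat\varepsilon Q_r\succeq 0$). Then $\hat X:=\bar X+\hat\varepsilon D$ is simultaneously polarized by $U$, has $\|\hat X\|_*=\|\bar X\|_*$ (since $\tr M=0$), and $\hat\varepsilon D\in\ker\cA$ because $\ker\cA$ is linear. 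This scaling step is the missing ingredient; once you insert it, your argument coincides with the paper's.
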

\begin{proof} 
  (ii)$\Rightarrow$(i): Assume (i)  does not hold, i.e.\ there  is $\hat X\neq \bar X$ such that $[\bar X,\hat X]\subset\cX$. By \Cref{cor:FlatNN1}, we find that $\bar X$ and $\hat X$ are simultaneously polarizable, i.e. there exists $U\in \mathcal{V}_{n,p}$ such that $\bar XU^T\in \bS^n_+$ and $\hat XU^T\in \bS_+^n$. In particular, by \Cref{lem:Polar}, $U\in \p \|\cdot\|_*(\bar X)$, hence, by \Cref{prop:SD1}, $U=\bar U\left(\begin{smallmatrix}I_r & 0 \\ 0 &  R\end{smallmatrix}\right)\bar V^T$ for some $R\in \mathcal{V}_{n-r,p-r}$. The latter comes from the fact that $U\in \cV_{n,p}$.
 Moreover, since $\hat XU^T\in \bS_+^n$, we find that
\begin{equation}\label{eq:SPDhat}
\bar U^T\hat X\bar V\left(\begin{smallmatrix}I_r & 0 \\ 0 &  R^T\end{smallmatrix}\right)=   \bar U^T(\hat XU^T) \bar U\in \bS^n_+.
\end{equation}
On the other hand, we also have 
\[
\bar U^T\bar X\bar V\left(\begin{smallmatrix}I_r & 0 \\ 0 &  R^T\end{smallmatrix}\right)=\diag(\sigma(\bar X))\in \cS^r_{++}.
\]
Combining this with \eqref{eq:SPDhat}, we find that 
\[
M:=\bar U^T(\hat X-\bar X)\bar V\left(\begin{smallmatrix}I_r & 0 \\ 0 &  R^T\end{smallmatrix}\right)\in \bS^n_+-\cS^r_{++},
\]
and, trivially, $M\left(\begin{smallmatrix}I_r & 0\\ 0 & RR^T\end{smallmatrix}\right)=M$.
Moreover
\begin{eqnarray*}
\tr(M)& = & \tr\left(\bar U^T\hat X\bar V\left(\begin{smallmatrix}I_r & 0 \\ 0 &  R^T\end{smallmatrix}\right)\right)-\tr\left(\bar U^T\bar X\bar V\left(\begin{smallmatrix}I_r & 0 \\ 0 &  R^T\end{smallmatrix}\right)\right)\\
& = & \|\bar U^T\hat X\bar V\left(\begin{smallmatrix}I_r & 0 \\ 0 &  R^T\end{smallmatrix}\right)\|_*-\|\bar U^T\bar X\bar V\left(\begin{smallmatrix}I_r & 0 \\ 0 &  R^T\end{smallmatrix}\right)\|_* \\
& = & \|\hat X\|_*-\|\bar X\|_*\\
& = & 0,
\end{eqnarray*}
where the second identity uses the positive semidefiniteness of the matrices in question (combined with \eqref{eq:SPD}), and the third one uses orthogonal invariance and \Cref{lem:OI}. Since also $\cA(\hat X)=\cA(\bar X)$, we consequently  have 
\[
0\neq \hat X-\bar X=\bar UM\left(\begin{smallmatrix}I_r & 0\\ 0 & R\end{smallmatrix}\right)\bar V^T\in W(\bar X)\cap \ker\cA.
\]
(i)$\Rightarrow$(ii): Assume (ii)  does not hold, i.e. there exists $M\in  (\bS^n_+-\cS^r_{++})\setminus\{0\}$ with $\tr(M)=0$ and $R\in \mathcal{V}_{n-r,p-r}$ such that  $Y\coloneqq\bar UM\left(\begin{smallmatrix}I_r & 0\\ 0 & R\end{smallmatrix}\right)\bar V^T\in \ker \cA$. Define $U:=\bar U\left(\begin{smallmatrix}I_r & 0 \\ 0 &  R\end{smallmatrix}\right)\bar V^T$. Since $\|R\|_{op}=1$, in view of \Cref{prop:SD1}, we have $U\in \p\|\cdot\|_*(\bar X)$. Now, for $\varepsilon>0$ set $X(\varepsilon)\coloneqq \bar X+\varepsilon Y$ . Then 
\begin{eqnarray*}
\bar U^T(X(\varepsilon) U^T)\bar U& =  & \bar U^T\bar XU^T\bar U+\varepsilon \bar U^TYU^T\bar U\\
& = & \diag(\sigma(\bar X))+\varepsilon M\left(\begin{smallmatrix}I_r & 0 \\ 0 &  RR^T\end{smallmatrix}\right)\\
& = &  \diag(\sigma(\bar X))+\varepsilon M.
\end{eqnarray*}
Recall  that $M\in \bS^n_+-\cS^r_{++}$, and $\diag(\sigma(\bar X))\in \cS^r_{++}$, and hence we can find $\hat \varepsilon >0$, sufficiently small, such that $\diag(\sigma(\bar X))+\hat \varepsilon M\in \bS^n_+$. Consequently, for $\hat X \coloneqq X(\hat \varepsilon)$, we have 
$\hat XU^T\in \bS_+^n$, i.e. $U$ polarizes  $\hat X$ (and $\bar X$). In addition, we find that
\begin{eqnarray*}
\|\hat X\|_*=\tr(\bar U^T(\hat XU^T)\bar U)=\tr(\diag(\sigma(\bar X)))+\hat\varepsilon\cdot\tr(M)=\|\bar X\|_*,
\end{eqnarray*}
as $\tr(M)=0$. Since we have $\cA(\hat X)=\cA(\bar X)$ as well,  \Cref{cor:FlatNN1} now gives the desired conclusion.
\end{proof} 

%

\subsection{The set  $W(\bar X)$ in \Cref{cor:Equiv}}\label{sec:W}

Some comments on the contents of \Cref{cor:Equiv} are in order.
  We note that that the set $W(\bar X)$ is a cone, owing to  the set $\bS^n_+-\cS^r_{++}$ being a cone.  While the cone is not reflection symmetric, the condition (ii) is equivalently formulated with the symmetrization of $W(\bar X)$ under the reflection $x \mapsto -x.$  The symmetrization of $W(\bar X)$ has the interpretation as the subset of the tangent space at $\bar X$ (in $\R^{n \times p}$) in which the nuclear norm changes linearly (i.e.\ is non-strictly convex).  The cone $W(\bar X)$ is \emph{not} generally convex, save for the case that $\bar{X}$ is full rank; in that case, the set $W(\bar X)$ simplifies to
\begin{equation*}\label{eq:WXfull}
W(\bar X) \coloneqq
\left\{
  \bar U\;M [I_p \; 0]^T\;\bar V^T
  \;
\middle\vert
\;
\begin{aligned}
M\in \bS^n, \; \tr(M)=0
\end{aligned}
\right\}.
\end{equation*}
Moreover, we point out that in the square case ($n=p$), the set $W(\bar X)$ simplifies to 
\[
W(\bar X)= \set{\bar UM\left(\begin{smallmatrix}I_r & 0\\ 0 & R\end{smallmatrix}\right)\bar V^T}{ M\in \bS^n_+-\cS^r_{++}, \; \tr(M)=0,\; R\in O(n-r)}.
\]
\noindent While $W(\bar X)$ is relatively pathological, we note that its span has a simple expression:
\begin{proposition}[$\lin W(\bar X)$]
  Let $\bar X\in \R^{n \times p}$ and let $r:=\rank \bar X$.  Let there be posed a singular value decomposition $\bar X=\bar U\diag(\sigma(\bar X))\bar V^T$ and let  $\cA\in \cL(\R^{n\times p}, \bE)$. Let $W(\bar X)$ be as in \Cref{cor:Equiv}.  Then 
\[
  \lin
  W(\bar X)
  =
\left\{
  \bar U\left(\begin{smallmatrix}A & B\\ C & D\end{smallmatrix}\right)\bar V^T
  \;
\middle\vert
\;
A \in \bS^r, B\in \R^{r\times (p-r)}, C\in \R^{(n-r)\times r}, D\in \R^{(n-r)\times (p-r)}
\right\}.
\]

\end{proposition}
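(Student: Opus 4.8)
The plan is to establish the two inclusions separately, after using the orthogonal invariance of the formula for $W(\bar X)$ to normalise $\bar U=I_n$, $\bar V=I_p$. Indeed, $Y\mapsto \bar U^T Y\bar V$ is a linear automorphism of $\R^{n\times p}$ carrying $W(\bar X)$ to the instance of the set in \Cref{cor:Equiv} built from $\diag(\sigma(\bar X))$ together with its trivial singular value decomposition, and carrying the claimed right-hand side to $\mathcal{M}:=\set{N\in\R^{n\times p}}{\text{the leading }r\times r\text{ block of }N\text{ is symmetric}}$; so it suffices to prove $\lin W=\mathcal{M}$ in that normalised instance. For the inclusion ``$\subseteq$'', I would note that for any $M\in\bS^n$ and any admissible $R$, writing $M$ in $r$-versus-$(n-r)$ block form, the leading $r\times r$ block of $M\left(\begin{smallmatrix}I_r & 0\\ 0 & R\end{smallmatrix}\right)$ is precisely $M_{11}$, which is symmetric. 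Hence $W\subseteq\mathcal{M}$, and since $\mathcal{M}$ is a subspace, so is $\lin W$.

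For ``$\supseteq$'' I would first dispose of the remark that the three defining conditions on $M$ do not constrain the span: the cone $\bS^n_+-\cS^r_{++}$ has nonempty interior in $\bS^n$ (it contains a translate of $\bS^n_+$), so $\lin(\bS^n_+-\cS^r_{++})=\bS^n$; the homogeneous linear conditions $\tr M=0$ and $M\left(\begin{smallmatrix}I_r&0\\0&RR^T\end{smallmatrix}\right)=M$ do carve out proper subspaces of $M$-space, but — and this is the point one must verify — the kernel of $M\mapsto M\left(\begin{smallmatrix}I_r & 0\\ 0 & R\end{smallmatrix}\right)$ together with the directions removed by these conditions are themselves not contained in $\{M\in\bS^n : \tr M=0\}$, so the image is not diminished. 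Granting this, I would exhibit spanning members: with the standard choice $R_0:=[I_{p-r}\ \, 0]^T\in\mathcal{V}_{n-r,p-r}$, symmetric $M$ supported on the first $p$ coordinates yield, after the multiplication, every matrix $\left(\begin{smallmatrix}A&B\\C'&D'\end{smallmatrix}\right)$ with $A$ symmetric and with $\left(\begin{smallmatrix}C'&D'\end{smallmatrix}\right)$ supported in its top $p-r$ rows (the $B$-block being free, since it reads off the $(1,2)$-block of $M$); then, letting $R=QR_0$ run over the Stiefel manifold as $Q$ ranges over $O(n-r)$ left-multiplies that lower block by $Q$, and the span over all such $Q$ spreads the top $p-r$ rows throughout the entire $(n-r)$-row lower block, so $\lin W$ absorbs all of the $C$- and $D$-blocks. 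Combining these contributions with the symmetric-leading-block directions already obtained gives $\mathcal{M}\subseteq\lin W$.

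The step I expect to be the main obstacle is precisely this reverse inclusion: verifying that the cone membership, the trace condition, and the support condition all become inactive after passing to linear spans (so that one recovers all of $\mathcal{M}$ and not a proper subspace), and making the ``use $R$ to rotate the lower block into position'' argument uniform across the rank and shape configurations. It is cleanest to peel off the degenerate cases $r=0$, $r=p$, and $n=p$ first, using the simplified descriptions of $W(\bar X)$ recorded in \Cref{sec:W}, and then run the general spanning argument under the assumption $0<r<p<n$.
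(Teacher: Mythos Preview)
Your normalisation to $\bar U=I_n$, $\bar V=I_p$ and the argument for the inclusion $\lin W\subseteq\mathcal M$ are correct and match the paper.  The reverse inclusion, however, contains concrete errors beyond the obstacle you flag.

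First, with $R_0=[I_{p-r}\ 0]^T$, write $M$ in $r$-versus-$(n-r)$ blocks.  The product $M\left(\begin{smallmatrix}I_r&0\\0&R_0\end{smallmatrix}\right)$ has $A$-block $M_{11}$, $B$-block $M_{12}R_0$, $C$-block $M_{21}=M_{12}^T$, and $D$-block $M_{22}R_0$.  The support condition forces $M_{12}=[B\ \ 0]$, whence $C=\left(\begin{smallmatrix}B^T\\0\end{smallmatrix}\right)$: the $C$-block is \emph{determined} by the $B$-block, and the nonzero part of the $D$-block is forced to be symmetric.  So you do not get ``every matrix with $(C'\ D')$ supported in its top $p-r$ rows''; you get a proper subspace.

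Second, replacing $R_0$ by $QR_0$ with $Q\in O(n-r)$ does \emph{not} left-multiply the lower block by $Q$.  The $C$-block is $M_{21}$, independent of $R$; only the $B$- and $D$-blocks see $R$, and via right multiplication.  One can manufacture a left $Q$-action on the lower rows by simultaneously conjugating $M$ by $\left(\begin{smallmatrix}I_r&0\\0&Q\end{smallmatrix}\right)$ (which does preserve all the constraints), but then the coupling between $B$ and $C$ persists and the span calculation must be redone from scratch.  As written, the spreading argument does not decouple $B$ from $C$, and so does not reach all of $\mathcal M$.

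The paper sidesteps this entanglement by an explicit flag construction: it builds concrete elements of $W$ (with specific $M$ and $R$ satisfying all three constraints) whose linear combinations successively span the $D$-block, then the symmetric $A$-block, then the $C$-block, then the $B$-block.  The trace and cone conditions are handled by padding $M$ with scalar multiples of $I_r$ or $I_{n-r}$ on the diagonal; the $B$/$C$ coupling is broken by averaging over $R\mapsto -R$.  Each step is a two-line computation, and no abstract ``constraints become inactive'' argument is needed.
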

\begin{proof}
  Let $W_4$ be the right-hand side of the displayed equation.  
The containment of $\lin W(\bar X) \subset W_4$ is immediate from the containment $W(\bar X) \subset W_4$ and the fact that the latter is a subspace.
For the reverse, we argue by construction of a flag $W_1 \subset W_2 \subset W_3 \subset W_4$ each of which we show is in $\lin W(\bar X)$.  Set 
\begin{equation}\label{eq:flag}
  \begin{aligned}
  W_1 &\coloneqq 
\left\{
  \bar U\left(\begin{smallmatrix}0 & 0 \\ 0 & D \end{smallmatrix}\right)\bar V^T
  \;
\middle\vert
\;
D \in  \R^{(n-r)\times (p-r)}
\right\}, \\
  W_2 &\coloneqq 
\left\{
  \bar U\left(\begin{smallmatrix}A & 0 \\ 0 & D \end{smallmatrix}\right)\bar V^T
  \;
\middle\vert
\;
A \in \bS^{r},
D \in  \R^{(n-r)\times (p-r)}\right\}, \\
W_3 &\coloneqq 
\left\{
  \bar U\left(\begin{smallmatrix}A & 0\\ C & D \end{smallmatrix}\right)\bar V^T
  \;
\middle\vert
\;
A \in \bS^{r},
C\in \R^{(n-r)\times r}, D\in \R^{(n-r)\times (p-r)}
\right\}.
\end{aligned}
\end{equation}

\noindent \underline{$W_1 \subset \lin W(\bar X)$}: For any element of $W_1$ for some $D \in  \R^{(n-r)\times (p-r)}$, let $R \in \mathcal{V}_{n-r,p-r}$ be a polarizing matrix such that $P=DR^T \in \bS^{n-r}_+$.  
Now set
\[
  M := 
  \left(\begin{smallmatrix}aI_r & 0 \\ 0 & P\end{smallmatrix}\right)
  \quad
  \text{where}
  \quad
  a := -\frac{\tr(P)}{r}.
\]
Then
\begin{equation}\label{eq:w0}
  \tfrac12M\left(\begin{smallmatrix}I_r & 0\\ 0 & R\end{smallmatrix}\right)
 -\tfrac12M\left(\begin{smallmatrix}I_r & 0\\ 0 & -R\end{smallmatrix}\right)
 =M
 \left(\begin{smallmatrix}0 & 0\\ 0 & R \end{smallmatrix}\right)
 =
 \left(\begin{smallmatrix}0 & 0\\ 0 & D \end{smallmatrix}\right).
\end{equation}
Since $PRR^T=DR^TRR^T=DR^T=P$ and $\tr(M) = 0,$ we conclude that $W_1 \subset \lin W(\bar X)$. \\

\noindent \underline{$W_2 \subset \lin W(\bar X)$}:  It suffices to show that $W_2/W_1 \subset \lin W(\bar X)/W_1$.
For arbitrary $A \in \bS^{r}$, let $P_1,P_2 \in \bS^{r}_{++}$ be such that $A=P_1-P_2.$
Now  set
\[
  M_i := 
  \left(\begin{smallmatrix}-P_{i} & 0 \\ 0 & a_iI_{n-r}\end{smallmatrix}\right)
  \quad
  \text{where}
  \quad
  a_i = \tfrac{\tr(P_i)}{n-r}\quad i=1,2.
\]
 Then $\tr(M_i)=0\;(i=1,2)$, and taking $R=[I_{p-r}\; 0]^T$, we have that 
\[
  \left(\begin{smallmatrix}A & 0 \\ 0 & * \end{smallmatrix}\right)
  =
  -M_1
  \left(\begin{smallmatrix}I_r & 0\\ 0 & R\end{smallmatrix}\right)
  +
  M_2
  \left(\begin{smallmatrix}I_r & 0\\ 0 & R\end{smallmatrix}\right)
  \in\lin W(\bar X), 
\]
where the $(*)$ represents a matrix of no importance. Therefore, $W_2/W_1 \subset \lin W(\bar X)/W_1$, which suffices to show the desired inclusion.\\ 

\noindent \underline{$W_3 \subset \lin W(\bar X)$}:  It suffices to show that $W_3/W_2 \subset \lin W(\bar X)/W_2$.
Take an arbitary element $W_3/W_2$ represented by some matrix $C \in \R^{(n-r)\times r}.$
For some $c$ sufficiently large
\[
  A = 
  \left(\begin{smallmatrix} -\tfrac{n-r}{r}cI_r  & C^T \\ C & c I_{n-r}\end{smallmatrix}\right)
  \in 
  \{
    M \in 
    \bS^n_+-\cS^r_{++} ~\vert~ \tr(M)=0
  \}.
\]
Taking $R=[I_{p-r}\; 0]^T$, we find 
\[
  \tfrac12A\left(\begin{smallmatrix}I_r & 0\\ 0 & R\end{smallmatrix}\right)
 +\tfrac12A\left(\begin{smallmatrix}I_r & 0\\ 0 & -R\end{smallmatrix}\right)
 =A
 \left(\begin{smallmatrix}I_r & 0\\ 0 & 0 \end{smallmatrix}\right)
 =
 \left(\begin{smallmatrix}A & 0\\ C & 0 \end{smallmatrix}\right)
  \in W(\bar X),
\]
and hence $W_3/W_2 \in \lin W(\bar X)/W_2$.\\

\noindent \underline{$W_4 \subset \lin W(\bar X)$}:  It suffices to show that $W_4/W_3 \subset \lin W(\bar X)/W_3$.
Take an arbitary element $W_4/W_3$ represented by some matrix $B \in \R^{r \times (p-r)}.$
Then for all $c$ sufficiently large,
\[
  A = 
  \left(\begin{smallmatrix} -\tfrac{p-r}{r}cI_r  & B & 0 \\ B^T & c I_{p-r}& 0 \\ 0 & 0 & 0\end{smallmatrix}\right)
  \in 
  \{
    M \in 
    \bS^n_+-\cS^r_{++} ~\vert~ \tr(M)=0
  \}.
\]
Taking $R=[I_{p-r}\; 0]^T$, we find 
\[
  A\left(\begin{smallmatrix} I_r & 0\\ 0 & R\end{smallmatrix}\right)
  =
  \left(\begin{smallmatrix} -\tfrac{p-r}{r}cI_r  & B  \\ B^T & c I_{p-r} \\ 0 & 0 \end{smallmatrix}\right)  
  \in W(\bar X),
\]
and hence we conclude that $W_4/W_3 \in \lin W(\bar X)/W_3$.

This concludes the proof.
\end{proof}

%
\section{Unique solutions in nuclear norm minimization}\label{sec:NNO}

\noindent
Throughout this section, let $(\bE, \ip{\cdot}{\cdot})$ be a (finite-dimensional) Euclidean space and let
$\cA\in \cL(\R^{n\times p},\bE)$. Our study above immediately yields uniqueness results for the nuclear norm minimization 
\begin{equation}\label{eq:BP}
\min_{X\in \R^{n\times p}} \|X\|_*\st \cA(X)=b.
\end{equation} 

\begin{corollary} \label{cor:UniMin} Let $\bar X\in\R^{n\times p}$ be a solution of \eqref{eq:BP} with $\rank \bar X=r$, and let $W(\bar X)$ be defined as in \Cref{cor:Equiv}.  Then the following are equivalent:
\begin{itemize}
\item[(i)] $\bar X$ is the unique solution of \eqref{eq:BP}.
\item[(ii)] $\ker \cA\cap W(\bar X)=\{0\}$. 
\end{itemize}
\end{corollary}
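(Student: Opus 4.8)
The plan is to reduce the uniqueness question to the absence of a proper line segment in the solution set and then invoke \Cref{cor:Equiv} directly. First I would record the elementary fact that, since \eqref{eq:BP} minimizes the convex function $\|\cdot\|_*$ over the affine — hence convex — manifold $\set{X}{\cA(X)=b}$, its set of solutions $S$ is convex: it equals the intersection of that manifold with the sublevel set $\lev{\le\mu}{\|\cdot\|_*}$, where $\mu$ denotes the optimal value, and sublevel sets of convex functions are convex. As $\bar X\in S$, we have $b=\cA(\bar X)$ and $\mu=\|\bar X\|_*$, so
\[
S=\set{X}{\cA(X)=\cA(\bar X),\;\|X\|_*=\|\bar X\|_*}=\cX,
\]
the set appearing in \Cref{cor:Equiv} (matching, if needed, the square-matrix formulation there to the present $\R^{n\times p}$ setting via the padding construction around \Cref{lem:Embed}).

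Next I would translate ``$\bar X$ is the unique solution of \eqref{eq:BP}'' into condition (i) of \Cref{cor:Equiv}. If $S=\{\bar X\}$, then $\cX$ obviously contains no proper line segment including $\bar X$. Conversely, if $S\neq\{\bar X\}$, choose $\hat X\in S\setminus\{\bar X\}$; by convexity of $S=\cX$ the whole segment $[\bar X,\hat X]$ lies in $\cX$, and this is a proper line segment including $\bar X$. Hence $\bar X$ is the unique solution of \eqref{eq:BP} if and only if $\cX$ contains no proper line segment including $\bar X$, which is precisely \Cref{cor:Equiv}(i).

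Finally, \Cref{cor:Equiv} asserts the equivalence of its (i) and (ii), the latter being $\ker\cA\cap W(\bar X)=\{0\}$; chaining the displayed equivalences gives the claim. I do not expect a genuine obstacle here: the substance is already carried by \Cref{cor:Equiv}, and the only points requiring (routine) care are the convexity argument that lets one pass from ``no second solution'' to ``no proper segment of solutions,'' together with the bookkeeping that identifies the solution set of \eqref{eq:BP} with the set $\cX$ of \Cref{cor:Equiv}.
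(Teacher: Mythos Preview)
Your proposal is correct and follows essentially the same approach as the paper: identify the (convex) solution set of \eqref{eq:BP} with the set $\cX$ from \Cref{cor:Equiv}, observe that by convexity uniqueness is equivalent to the absence of a proper line segment through $\bar X$, and then invoke \Cref{cor:Equiv}. Your added remarks (the explicit sublevel-set justification of convexity and the padding caveat) are harmless elaborations of the same argument.
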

\begin{proof} Observe that the solution set $\cX$ of  \eqref{eq:BP} is convex and can be written as   $\cX=\set{X\in \R^{n\times n}}{\cA(X)=\cA(\bar X),\; \|X\|_*=\|\bar X\|_*}$. Now, by convexity, $\cX$ does not contain a proper line segment including $\bar X$ if and only if $\bar X$ is the unique solution of \eqref{eq:BP}. Thus \Cref{cor:Equiv} gives the desired statement.
\end{proof}

%

\subsection{Sufficient conditions through convex analysis}

\noindent
The following result is a generic convex analysis result which, given a solution,  provides a sufficient condition for uniqueness of solutions to a(ny) convex optimization problem. It was established in \cite{Gil 17} that in the polyhedral convex case it is also necessary which was then exploited to establish uniqueness of solutions for $\ell_1$-minimization problems.

 \begin{proposition}\label{prop:CXV} Let $f:\bE\to \rp$ be proper, convex and assume that $0\in\inter \p f(\bar x)$. Then 
 $\argmin f=\{\bar x\}$.
 \end{proposition}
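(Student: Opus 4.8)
The statement asserts that if $0 \in \inter \p f(\bar x)$ then $\bar x$ is the unique minimizer of $f$. The plan is to first note that $0 \in \p f(\bar x)$ already gives that $\bar x$ is \emph{a} minimizer, by the Fermat rule (the subgradient inequality $f(\bar x) + \ip{0}{x - \bar x} \le f(x)$ is exactly $f(\bar x) \le f(x)$ for all $x$). So the content is strict optimality. The natural route is to exploit that the interior hypothesis gives room to move: there is $\rho > 0$ with $\rho \bB \subset \p f(\bar x)$, where $\bB$ is the unit ball of $\bE$.

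The core argument I would run is by contradiction. Suppose $x \ne \bar x$ is another minimizer, so $f(x) = f(\bar x)$. Set $d := (x - \bar x)/\norm{x - \bar x}$ and consider the subgradient $y := \rho d \in \p f(\bar x)$. The subgradient inequality at $\bar x$ applied to the point $x$ gives
\[
f(x) \ge f(\bar x) + \ip{y}{x - \bar x} = f(\bar x) + \rho \norm{x - \bar x} > f(\bar x),
\]
which contradicts $f(x) = f(\bar x)$. Hence no such $x$ exists, and since $\bar x$ is a minimizer, $\argmin f = \{\bar x\}$. One should phrase this cleanly: the subgradient inequality holds for all $x \in \dom f$, and any competing minimizer lies in $\dom f$, so the displayed chain is legitimate.

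There is essentially no serious obstacle here; the only point requiring a sentence of care is the direction of the argument — one uses the subgradient inequality at the \emph{candidate} point $\bar x$ evaluated at the \emph{competitor} $x$, not the other way around, and one needs $\dom f \neq \emptyset$ (guaranteed by properness) together with the fact that $\bar x \in \dom f$ (forced since $\p f(\bar x) \neq \emptyset$). An alternative formulation avoiding the contradiction: for \emph{every} $x \neq \bar x$, choosing $y = \rho(x-\bar x)/\norm{x-\bar x} \in \p f(\bar x)$ yields $f(x) \ge f(\bar x) + \rho\norm{x - \bar x} > f(\bar x)$ directly, so $\bar x$ is the strict global minimizer. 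I would likely present it in this direct form, as it is marginally cleaner and makes transparent that the conclusion is strong (strict minimality with a linear growth lower bound of modulus $\rho$), which is worth remarking since it previews the quantitative flavour of the uniqueness conditions developed later for the nuclear norm problem.
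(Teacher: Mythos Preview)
Your argument is correct and essentially identical to the paper's: both pick a subgradient at $\bar x$ pointing in the direction $x-\bar x$ and read off strict inequality from the subgradient inequality. The only cosmetic difference is that you normalize first (obtaining the uniform linear lower bound $f(x)\ge f(\bar x)+\rho\|x-\bar x\|$), whereas the paper takes $\varepsilon(x-\bar x)\in\p f(\bar x)$ for an $x$-dependent $\varepsilon>0$ and arrives at $f(x)\ge f(\bar x)+\varepsilon\|x-\bar x\|^2$.
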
 
\begin{proof} Let $x\in \bE$. By assumption, there exists $\varepsilon>0$ such that $\varepsilon(x-\bar x)\in \p f(\bar x)$. Consequently
\[
f(x)\geq f(\bar x)+\ip{\varepsilon(x-\bar x)}{x-\bar x}=f(\bar x)+\varepsilon \|x-\bar x\|^2>f(\bar x).
\]
\end{proof}

%
%

\noindent
We will, of course, apply this to the objective function $f=\|\cdot\|_*+\delta_{\{0\}}(\cA(\cdot)-b)$ of \eqref{eq:BP}. It turns out that the following conditions at some (feasible) point $\bar X$ are equivalent to having $0\in \inter (\p f(\bar X))$.

\begin{assumption}\label{ass:Suff} For $\bar X\in \R^{n\times p}$ such that $\cA(\bar X)=b$ it holds that:
\begin{itemize}
\item[(i)]  $\ri (\p \|\cdot\|_*(\bar X))\cap \rge \cA^*\neq\emptyset$;
\item[(ii)] $\para\left( \p \|\cdot\|_*(\bar X)\right)+\rge\cA^*=\R^{n\times p}$.
\end{itemize}
\end{assumption}


\noindent
The reader can make these conditions even more tangible by inserting the respective expressions for the relative interior and parallel subspace of the $\p \|\cdot\|_*(\bar X)$ provided in \Cref{prop:SD1} (and \eqref{eq:Parallel}). 

We now provide the advertized characterization. 

\begin{proposition}\label{prop:Suff}   Let $\cA\in \cL(\R^{n\times p},\bE)$, $b\in \bE$ and define the (closed) proper, convex function $f:\R^{n\times p}\to \rp$ by $
f(X)=\|X\|_*+\delta_{\{0\}}(\cA(X)-b).
$
For $\bar X\in\R^{n\times p}$ such that $\cA(\bar X)=b$, the following are equivalent:
\begin{itemize}
\item[(I)] $0\in \inter\p f(\bar X).$
\item[(II)] \Cref{ass:Suff} holds at $\bar X$.
\end{itemize}
\end{proposition}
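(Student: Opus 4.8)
The plan is to compute $\p f(\bar X)$ via the sum rule for subdifferentials and then identify when $0$ lies in its interior. First I would write $f = \|\cdot\|_* + g$ where $g(X) := \delta_{\{0\}}(\cA(X) - b)$ is the indicator of the affine manifold $\cA^{-1}(b)$; since $\bar X$ is feasible, $g = \delta_{\cA^{-1}(b)}$ near $\bar X$, and its subdifferential at any feasible point is the normal cone to that affine set, namely $\ker(\cA)^\perp = \rge \cA^*$. Both $\|\cdot\|_*$ and $g$ are proper, closed, convex, and $\dom \|\cdot\|_* = \R^{n\times p}$ meets $\dom g$ trivially (no relative-interior qualification needed because $\|\cdot\|_*$ is finite everywhere), so the Moreau–Rockafellar sum rule \cite[Theorem 23.8]{Roc 70} gives
\[
\p f(\bar X) = \p\|\cdot\|_*(\bar X) + \rge \cA^*.
\]
So (I) becomes: $0 \in \inter\!\left(\p\|\cdot\|_*(\bar X) + \rge \cA^*\right)$.

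Next I would translate "$0$ in the interior of a sum $C + L$" for a convex set $C$ and a subspace $L$ into the two conditions of \Cref{ass:Suff}. The key observation is that $\inter(C+L) \ne \emptyset$ if and only if $\aff(C+L) = \R^{n\times p}$, which (since $L$ is a subspace and $\aff(C+L) = \aff C + L$, and the subspace parallel to $C+L$ is $\para C + L$) is exactly $\para(\p\|\cdot\|_*(\bar X)) + \rge \cA^* = \R^{n\times p}$ — condition (ii). Under this, $\inter(C+L) = \ri(C+L)$, and using the relative-interior calculus $\ri(C+L) = \ri C + L$ \cite[Corollary 6.6.2]{Roc 70}, we get $0 \in \inter(C+L)$ iff $0 \in \ri C + L$ iff $\ri C \cap (-L) = \ri C \cap L \ne \emptyset$ (as $L = \rge \cA^*$ is a subspace, $-L = L$) — which is condition (i): $\ri(\p\|\cdot\|_*(\bar X)) \cap \rge \cA^* \ne \emptyset$. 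So (II) $\Rightarrow$ (I) follows by reading this chain forward, and (I) $\Rightarrow$ (II): if $0 \in \inter(C+L)$ then certainly $\aff(C+L) = \R^{n\times p}$, giving (ii); and then the relative-interior identity forces $\ri C \cap L \ne \emptyset$, giving (i).

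I would organize the write-up as: (1) establish the sum-rule formula for $\p f(\bar X)$; (2) prove a small lemma (or just inline it) that for nonempty convex $C$ and subspace $L$, $0 \in \inter(C + L)$ iff $\para C + L = \bE$ and $\ri C \cap L \ne \emptyset$; (3) apply with $C = \p\|\cdot\|_*(\bar X)$, $L = \rge \cA^*$, noting $\p\|\cdot\|_*(\bar X)$ is nonempty (it contains any polarizing Stiefel matrix by \Cref{lem:Polar}, or simply because $\|\cdot\|_*$ is finite at $\bar X$). The main obstacle — really the only nonroutine point — is being careful with the relative-interior arithmetic: one must justify $\ri(C + L) = \ri C + L$ and that $\aff(C+L) = \bE$ is equivalent to $\inter(C+L) \ne \emptyset$; both are standard \cite[Chapter 6]{Roc 70} but deserve an explicit citation since the whole equivalence hinges on them. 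Everything else (the normal cone to an affine manifold is $\rge\cA^*$, $-\rge\cA^* = \rge\cA^*$, the validity of the sum rule given $\dom\|\cdot\|_* = \R^{n\times p}$) is immediate.
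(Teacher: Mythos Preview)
Your proposal is correct and follows essentially the same route as the paper: both compute $\p f(\bar X)=\p\|\cdot\|_*(\bar X)+\rge\cA^*$ via the sum rule and then use the relative-interior calculus $\ri(C+L)=\ri C+L$ together with $\aff(C+L)=\para C+L$ to translate $0\in\inter(C+L)$ into the two conditions of \Cref{ass:Suff}. Your packaging as an abstract ``$0\in\inter(C+L)$ iff $\para C+L=\bE$ and $\ri C\cap L\neq\emptyset$'' lemma is a tidy way to present exactly what the paper does inline.
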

\begin{proof} Observe that  $\p(\delta_{\{0\}}((\cdot)-b)\circ \cA)(\bar X)=\cA^*\p\delta_{\{0\}}(0)=\cA^*\bE=\rge \cA^*$, by the chain rule \cite[Theorem 23.9]{Roc 70}, and consequently  
$
\p f(\bar X)=\p\|\cdot\|_*(\bar X)+\rge \cA^*,
$
by the sum rule \cite[Theorem 23.8]{Roc 70}. Hence (I) reads
\[
0\in \inter(\p \|\cdot\|_*(\bar X)+\rge \cA^*)=\ri (\p \|\cdot\|_*(\bar X))+\rge\cA^*,
\]
where the identity uses the sum  rule for the relative interior  \cite[Corollary 6.6.2]{Roc 70} and the fact that a subspace is relatively open.  This already shows that (I) implies $\ri (\p \|\cdot\|_*(\bar X))\cap \rge \cA^*\neq\emptyset$.
On the other hand, it also yields that, for any $y\in \p \|\cdot\|_*(\bar X)$, we have 
\begin{eqnarray*}
\R^{n\times p} & = & \aff (\p \|\cdot\|_*(\bar X)+\rge \cA^*)\\
& = & \aff (\p \|\cdot\|_*(\bar X)+\rge\cA^*\\
&=  & \aff(\p \|\cdot\|_*(\bar X)-y)+y+\rge \cA^*\\
& = & \para \p \|\cdot\|_*(\bar X)+\rge \cA^*.
\end{eqnarray*}
All in all, (I) implies (II).

Conversely, if (II),  starting from $\para \p \|\cdot\|_*(\bar X)+\rge \cA^*$, the latter equations shows   $\R^{n\times p}=\aff (\p \|\cdot\|_*(\bar X)+\rge \cA^*)$, while $\ri (\p \|\cdot\|_*(\bar X))\cap \rge \cA^*\neq\emptyset$ implies
\[
0\in\ri (\p \|\cdot\|_*(\bar X))+\rge\cA^*= \ri(\p \|\cdot\|_*(\bar X)+\rge\cA^*)=\inter(\p \|\cdot\|_*(\bar X)+\rge \cA^*),
\]
where the first identity is, again, due to the sum rule for the relative interior, while  the  last identity uses the fact that the relative interior is an interior if (and only if) the parallel subspace (which is here equal to the affine hull) of the convex set in question is the whole space.
\end{proof}

\begin{corollary} \label{cor:Suff} Let  $\bar X$ be  a solution of \eqref{eq:BP} such that \Cref{ass:Suff} holds at $\bar X$. Then $\bar X$ is the unique solution of \eqref{eq:BP}
\end{corollary}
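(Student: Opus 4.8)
The plan is to deduce \Cref{cor:Suff} essentially for free by chaining together the results already in hand. The final statement is \Cref{cor:Suff}: if $\bar X$ solves \eqref{eq:BP} and \Cref{ass:Suff} holds at $\bar X$, then $\bar X$ is the unique solution. The natural route is: \Cref{ass:Suff} at $\bar X$ $\Longrightarrow$ $0 \in \inter \p f(\bar X)$ via \Cref{prop:Suff} $\Longrightarrow$ $\argmin f = \{\bar X\}$ via \Cref{prop:CXV} $\Longrightarrow$ $\bar X$ is the unique solution of \eqref{eq:BP}, because $f = \|\cdot\|_* + \delta_{\{0\}}(\cA(\cdot) - b)$ is exactly the (extended-valued) objective of \eqref{eq:BP}, so $\argmin f$ is precisely the solution set of \eqref{eq:BP}.

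Concretely, I would write: set $f(X) = \|X\|_* + \delta_{\{0\}}(\cA(X) - b)$ as in \Cref{prop:Suff}; this is proper (since $\bar X$ is feasible, $\bar X \in \dom f$), convex, and closed. Since \Cref{ass:Suff} holds at $\bar X$, \Cref{prop:Suff} (implication (II)$\Rightarrow$(I)) gives $0 \in \inter \p f(\bar X)$. Then \Cref{prop:CXV} applies verbatim and yields $\argmin f = \{\bar X\}$. Finally observe that $X$ is feasible for \eqref{eq:BP} iff $\cA(X) = b$ iff $\delta_{\{0\}}(\cA(X)-b) = 0$, in which case $f(X) = \|X\|_*$; hence the minimizers of $f$ over $\R^{n\times p}$ coincide with the solutions of \eqref{eq:BP}, and therefore $\bar X$ is the unique solution.

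There is really no obstacle here — the corollary is a one-line consequence of the two preceding results, and the only thing to be careful about is the bookkeeping that $\argmin f$ genuinely equals the solution set of \eqref{eq:BP} (this uses that \eqref{eq:BP} is feasible, which is assumed throughout, so that $f \not\equiv +\infty$ and the constrained problem and the unconstrained minimization of $f$ have the same optimal solutions). If anything, I might also remark in passing that this recovers, via \Cref{prop:Bridge}-type reasoning, a sufficient-only (not necessary, by \Cref{ex:Counter}) criterion complementary to the exact characterization in \Cref{cor:UniMin}, but that is commentary rather than part of the proof. The proof itself is: invoke \Cref{prop:Suff}, then invoke \Cref{prop:CXV}, then identify $\argmin f$ with the solution set.

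\begin{proof}
Define $f:\R^{n\times p}\to\rp$ by $f(X)=\|X\|_*+\delta_{\{0\}}(\cA(X)-b)$ as in \Cref{prop:Suff}. Since \eqref{eq:BP} is feasible and $\bar X$ is a solution, $\cA(\bar X)=b$, so $\bar X\in\dom f$ and $f$ is proper, convex and closed. As \Cref{ass:Suff} holds at $\bar X$, \Cref{prop:Suff} (the implication (II)$\Rightarrow$(I)) yields $0\in\inter\p f(\bar X)$. \Cref{prop:CXV} then gives $\argmin f=\{\bar X\}$. Finally, $X\in\R^{n\times p}$ is feasible for \eqref{eq:BP} if and only if $\cA(X)=b$, in which case $f(X)=\|X\|_*$; hence the set of minimizers of $f$ over $\R^{n\times p}$ coincides with the solution set of \eqref{eq:BP}. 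Therefore $\bar X$ is the unique solution of \eqref{eq:BP}.
\end{proof}
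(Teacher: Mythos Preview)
Your proof is correct and follows exactly the same approach as the paper, which simply says ``Combine \Cref{prop:CXV} and \Cref{prop:Suff}.'' You have spelled out the bookkeeping (properness of $f$, identification of $\argmin f$ with the solution set of \eqref{eq:BP}) that the paper leaves implicit, but the route is identical.
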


\begin{proof} Combine \Cref{prop:CXV} and \Cref{prop:Suff}. 
\end{proof}

\subsection{More insight} Combining \Cref{cor:Suff}  and \Cref{cor:Equiv}, it follows readily that \Cref{ass:Suff} at $\bar X$ implies that $W(\bar X)\cap \ker\cA=\{0\}$. On the other hand, this argument is not very illuminating when trying to understand the exact interplay of these two types of conditions. Moreover, it is not clear whether \Cref{ass:Suff} might also be necessary for uniqueness of solutions (as it is for its $\ell_1$-analog). We shed some light on these issues now and start with an auxiliary result.

\begin{lemma}\label{lem:Space} Let $\bar X$ satisfy \Cref{ass:Suff}. Then 
\[
\R^{n\times p}=\rge \cA^*+\R_{+}\p\|\cdot\|_*(\bar X).
\]
\end{lemma}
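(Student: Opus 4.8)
The statement to prove is that under \Cref{ass:Suff} we have $\R^{n\times p}=\rge\cA^*+\R_{+}\p\|\cdot\|_*(\bar X)$. The key observation is that $\R_{+}\p\|\cdot\|_*(\bar X)$ is much larger than $\para\p\|\cdot\|_*(\bar X)$: since $0\notin\p\|\cdot\|_*(\bar X)$ but the set is nonempty, the cone generated by it picks up a direction transverse to the parallel subspace. Concretely, fix any $Y_0\in\ri\p\|\cdot\|_*(\bar X)$ — which exists and moreover lies in $\rge\cA^*$ by \Cref{ass:Suff}(i). Then by \eqref{eq:RelInt} (the characterization of relative interior points), $\R_{+}(\p\|\cdot\|_*(\bar X)-Y_0)=\para\p\|\cdot\|_*(\bar X)$, so that
\[
\R_{+}\p\|\cdot\|_*(\bar X)\supseteq Y_0+\R_{+}(\p\|\cdot\|_*(\bar X)-Y_0)=Y_0+\para\p\|\cdot\|_*(\bar X).
\]
Actually one should be a little careful: $\R_+$ of a set $C$ contains $tx$ for all $t\ge 0,x\in C$, and $\R_+(C-Y_0)$ contains $t(x-Y_0)$; one wants to combine $Y_0$ (taking $t=1$, $x=Y_0$) with these. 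Since $\R_+\p\|\cdot\|_*(\bar X)$ is closed under nonnegative scaling and (because $\p\|\cdot\|_*(\bar X)$ is convex) is in fact a convex cone, it contains $Y_0+t(x-Y_0)$ whenever $x\in\p\|\cdot\|_*(\bar X)$, $t\ge 0$; that is exactly $Y_0+\R_+(\p\|\cdot\|_*(\bar X)-Y_0)=Y_0+\para\p\|\cdot\|_*(\bar X)$.

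\textbf{Finishing.} Now add $\rge\cA^*$ to both sides. Since $Y_0\in\rge\cA^*$, we get $Y_0+\para\p\|\cdot\|_*(\bar X)+\rge\cA^*=\para\p\|\cdot\|_*(\bar X)+\rge\cA^*$, and by \Cref{ass:Suff}(ii) this equals $\R^{n\times p}$. Hence
\[
\R^{n\times p}=Y_0+\para\p\|\cdot\|_*(\bar X)+\rge\cA^*\subseteq \R_{+}\p\|\cdot\|_*(\bar X)+\rge\cA^*\subseteq\R^{n\times p},
\]
forcing equality. The reverse inclusion $\rge\cA^*+\R_{+}\p\|\cdot\|_*(\bar X)\subseteq\R^{n\times p}$ is trivial.

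\textbf{Main obstacle.} There is no deep obstacle here; the one point requiring care is the set-algebra step showing $\R_{+}\p\|\cdot\|_*(\bar X)\supseteq Y_0+\para\p\|\cdot\|_*(\bar X)$. This hinges on two facts: $\p\|\cdot\|_*(\bar X)$ is convex (so its conical hull is a convex cone, closed under sums of scaled elements), and $Y_0\in\ri\p\|\cdot\|_*(\bar X)$ so that \eqref{eq:RelInt} applies to identify $\R_+(\p\|\cdot\|_*(\bar X)-Y_0)$ with the parallel subspace. One should state explicitly that $\R_+C$ for convex $C$ is convex, then note $1\cdot Y_0\in\R_+C$ and $\R_+(C-Y_0)\subseteq \R_+C - \R_+ Y_0\subseteq \R_+C+\R_+C=\R_+C$ after translation, or more cleanly observe directly that for $x\in C$, $t\ge 0$, the point $(1-\min\{t,1\})\cdot(\text{something in }C)+\dots$ — but the cleanest route is: $Y_0+t(x-Y_0)=(1-t)Y_0+tx\in C\subseteq\R_+C$ for $t\in[0,1]$, and for $t>1$, $Y_0+t(x-Y_0)=t\big(x+\tfrac{1-t}{t}(x-Y_0)\big)$ wait — simpler still, $Y_0+t(x-Y_0) = \tfrac{1}{s}\big((1-ts)Y_0+ts\,x\big)$ is a positive multiple of a convex combination of $Y_0,x\in C$ for $s>0$ small enough that $ts\le 1$, hence lies in $\R_+C$. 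I would present this last line as the justification.
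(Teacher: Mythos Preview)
Your argument is correct and follows essentially the same approach as the paper: both pick $Y_0\in\ri\p\|\cdot\|_*(\bar X)\cap\rge\cA^*$, invoke \eqref{eq:RelInt} to identify $\R_+(\cS-Y_0)=\para\cS$, and absorb the shift $Y_0$ into $\rge\cA^*$. The paper packages this as a chain of set equalities $\rge\cA^*+\para\cS=\R_+(\rge\cA^*+\cS-Y_0)=\R_+(\rge\cA^*+\cS)=\rge\cA^*+\R_+\cS$, whereas you isolate the containment $Y_0+\para\cS\subseteq\R_+\cS$ directly via the convex-combination trick; the content is the same.
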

\begin{proof} Set $\cS:=\p\|\cdot\|_*(\bar X)$, and let $Y\in \rge \cA^*\cap \ri \cS$ which exists by \Cref{ass:Suff} (i). Then
\begin{eqnarray*}
\R^{n\times p} & = & \rge \cA^*+\para (\p\|\cdot\|_*(\bar X))\\
& = &  \rge \cA^*+\R_+(\cS-Y)\\
& = & \R_+\left(\rge \cA^*+\cS-Y\right)\\
&= &  \R_+(\rge\cA^*+\cS)\\
& = & \rge\cA^*+\R_+\cS.
\end{eqnarray*}
Here the second identity uses the property  of relative interior points from \eqref{eq:RelInt}. 
\end{proof}

\noindent
As alluded to above, the following result is clear from our previous analysis. We give an explicit proof in the hopes of consolidating the different flavors of the  conditions in  \Cref{ass:Suff} and \Cref{cor:Equiv}, respectively.

\begin{proposition}\label{prop:Bridge} Let $\bar X\in \R^{n \times p}$ with $r\coloneqq\rank \bar X$ and singular value decomposition $\bar X=\bar U\diag(\sigma(\bar X))\bar V^T$. If  \Cref{ass:Suff} holds, then 
$
\ker\cA\cap W(\bar X)=\{0\}.
$
\end{proposition}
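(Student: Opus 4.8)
The plan is to take an arbitrary $Y \in \ker\cA \cap W(\bar X)$ and show $Y = 0$, exploiting the fact that \Cref{ass:Suff} gives us a lot of elements of $\p\|\cdot\|_*(\bar X)$ that annihilate $Y$. First I would unpack membership in $W(\bar X)$: writing $Y = \bar U M \left(\begin{smallmatrix}I_r & 0\\ 0 & R\end{smallmatrix}\right)\bar V^T$ with $M \in \bS^n_+ - \cS^r_{++}$, $\tr(M) = 0$, $R \in \cV_{n-r,p-r}$ and $M\left(\begin{smallmatrix}I_r & 0\\ 0 & RR^T\end{smallmatrix}\right) = M$. Following the construction in the proof of \Cref{cor:Equiv}(i)$\Rightarrow$(ii) in reverse, set $U := \bar U\left(\begin{smallmatrix}I_r & 0\\ 0 & R\end{smallmatrix}\right)\bar V^T \in \cV_{n,p}$; then $U \in \p\|\cdot\|_*(\bar X)$ by \Cref{prop:SD1}, and the computation there shows $\bar U^T(X(\varepsilon)U^T)\bar U = \diag(\sigma(\bar X)) + \varepsilon M$, so for small $\hat\varepsilon > 0$ we get $\hat X := \bar X + \hat\varepsilon Y$ with $\hat X U^T \in \bS^n_+$, $\|\hat X\|_* = \|\bar X\|_*$ (since $\tr M = 0$), and $\cA(\hat X) = \cA(\bar X)$ (since $Y \in \ker\cA$). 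Thus $[\bar X, \hat X]$ is a flat in $\cX$, i.e. $\|X(t)\|_* = \|\bar X\|_*$ on $[0,1]$, and $\hat X - \bar X = \hat\varepsilon Y \in \ker\cA$.

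The key step is then to use \Cref{ass:Suff} to force $Y = 0$. By \Cref{th:FlatNN} (via \eqref{eq:Inter} in its proof), $\p\|\cdot\|_*(X(t)) \subseteq \p\|\cdot\|_*(\bar X) \cap \p\|\cdot\|_*(\hat X)$ for $t \in (0,1)$, and since $\ip{\hat X - \bar X}{\cdot}$ is constant (namely $0$, being the derivative of the constant function $f(t) = \|X(t)\|_*$) on $\p\|\cdot\|_*(X(t))$, we in fact get from \eqref{eq:Chain} that $\ip{Y}{W} = 0$ for every $W \in \p\|\cdot\|_*(X(t))$, $t \in (0,1)$. In particular $\ip{Y}{U} = 0$ for the polarizing $U$ above. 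Now I want to upgrade this to $\ip{Y}{W} = 0$ for \emph{all} $W \in \p\|\cdot\|_*(\bar X)$: since $U \in \p\|\cdot\|_*(X(t))$ polarizes $\bar X$ and $\hat X$ simultaneously, and by \Cref{lem:Polar} every such $U$ lies in $\p\|\cdot\|_*(\bar X) \cap \p\|\cdot\|_*(\hat X)$, the identity $\ip{Y}{U} = 0$ holds. More cleanly: from $\|X(t)\|_* = \|\bar X\|_*$ constant together with $\ip{X(t)}{W} = \|\bar X\|_*$ for $W \in \p\|\cdot\|_*(X(t))$, differentiating gives $\ip{Y}{W} = 0$ for all such $W$; and by \eqref{eq:XbarCont} each fixed $W_0 \in \p\|\cdot\|_*(\bar X)$ that also polarizes $\hat X$ arises this way. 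To cover all of $\p\|\cdot\|_*(\bar X)$ I would instead argue directly: $\ip{Y}{\bar X} = \ip{\hat\varepsilon^{-1}(\hat X - \bar X)}{\bar X}$, and combined with $\|\hat X\|_* = \|\bar X\|_*$ and convexity one checks $\p\|\cdot\|_*(\bar X) \subseteq \{W : \ip{Y}{W} = 0\}$, i.e. $Y \perp \R_+\,\p\|\cdot\|_*(\bar X)$.

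Now I invoke \Cref{lem:Space}: \Cref{ass:Suff} gives $\R^{n\times p} = \rge\cA^* + \R_+\,\p\|\cdot\|_*(\bar X)$. Since $Y \in \ker\cA = (\rge\cA^*)^\perp$ and $Y \perp \R_+\,\p\|\cdot\|_*(\bar X)$ by the previous paragraph, we get $Y \perp \R^{n\times p}$, hence $Y = 0$. This completes the proof that $\ker\cA \cap W(\bar X) = \{0\}$. The main obstacle I anticipate is the middle step: showing that $Y$ is orthogonal to \emph{all} of $\p\|\cdot\|_*(\bar X)$ (not merely to a single polarizing $U$), since $W(\bar X)$ only directly supplies one such $U$ per element $Y$. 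The cleanest route is probably to observe that $\ip{Y}{W} = \hat\varepsilon^{-1}\ip{\hat X - \bar X}{W} = \hat\varepsilon^{-1}(\ip{\hat X}{W} - \ip{\bar X}{W})$ and that for $W \in \p\|\cdot\|_*(\bar X)$ we have $\ip{\bar X}{W} = \|\bar X\|_*$, while $\ip{\hat X}{W} \le \|W\|_{op}\|\hat X\|_* \le \|\hat X\|_* = \|\bar X\|_*$; running the symmetric bound with $\bar X$ and $\hat X$ interchanged (legitimate since the flat is symmetric in its endpoints and $\|\bar X\|_* = \|\hat X\|_*$) pins $\ip{\hat X}{W} = \|\bar X\|_*$, giving $\ip{Y}{W} = 0$. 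Feeding this into \Cref{lem:Space} closes the argument.
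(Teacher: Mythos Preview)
Your overall skeleton matches the paper's: take $Y\in\ker\cA\cap W(\bar X)$, invoke \Cref{lem:Space} to write $Y=Z+tW$ with $Z\in\rge\cA^*$, $t\ge 0$, and $W\in\p\|\cdot\|_*(\bar X)$, and then compute $\|Y\|^2=\ip{Y}{Z}+t\,\ip{Y}{W}\le 0$. The paper obtains $\ip{Y}{W}\le 0$ by an explicit block-matrix calculation, writing $M=\left(\begin{smallmatrix}A-D&B\\B^T&C\end{smallmatrix}\right)$ and using $\tr(RF^TC)\le\|RF^T\|_{op}\|C\|_*\le\tr(C)$ together with $\tr(A)+\tr(C)=\tr(D)$. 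Your route---reconstruct the flat endpoint $\hat X=\bar X+\hat\varepsilon Y$ with $\|\hat X\|_*=\|\bar X\|_*$ (exactly the construction in the proof of \Cref{cor:Equiv}) and then bound $\ip{\hat X}{W}\le\|W\|_{op}\|\hat X\|_*\le\|\bar X\|_*=\ip{\bar X}{W}$ for any $W\in\p\|\cdot\|_*(\bar X)$---is a legitimate and somewhat more conceptual alternative that avoids unpacking the block structure of $M$.

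There is, however, a real gap in your ``symmetry'' step. To upgrade $\ip{Y}{W}\le 0$ to equality by interchanging $\bar X$ and $\hat X$, you would need $W\in\p\|\cdot\|_*(\hat X)$, which you have no reason to assume for a generic $W\in\p\|\cdot\|_*(\bar X)$; nor can you extend the flat past $\bar X$, since $-M$ need not lie in $\bS^n_+-\cS^r_{++}$. The good news is that equality is unnecessary: the inequality $\ip{Y}{W}\le 0$ that you correctly derived already yields
\[
\|Y\|^2=\underbrace{\ip{Y}{Z}}_{=0}+t\underbrace{\ip{Y}{W}}_{\le 0}\le 0,
\]
hence $Y=0$. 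This is exactly how the paper concludes as well, so once you drop the attempt to prove orthogonality and settle for the one-sided bound, your argument goes through.
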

\begin{proof} Let $X\in \ker\cA\cap W(\bar X)$. Then, by definition of $W(\bar X)$, there exists $M=\left(\begin{smallmatrix} A-D& B\\ B^T & C\end{smallmatrix}\right) $ with $A\in \bS^r_{+}, C\in \bS^{n-r}_+$, $D\in \bS^{r}_{++}$, $\tr(A)+\tr (C)=\tr(D)$, and $R\in \mathcal{V}_{n-r,p-r}$ such that 
\[
X=\bar UM\left(\begin{smallmatrix} I_r& 0\\ 0 & R\end{smallmatrix}\right)\bar V^T.
\]
On the other hand, by \Cref{lem:Space} and \Cref{prop:SD1} we find $Z\in \rge \cA^*$, $F\in \bB_{op}$ and $t\geq 0$ such that 
\[
X=Z+t\cdot\bar U \left(\begin{smallmatrix} I_r& 0\\ 0 & F\end{smallmatrix}\right)\bar V^T.
\]
Consequently, we have 
\begin{eqnarray*}
\|X\|^2 & = &  \ip{\bar UM\left(\begin{smallmatrix} I_r& 0\\ 0 & R\end{smallmatrix}\right)\bar V^T}{Z+t\cdot\bar U \left(\begin{smallmatrix} I_r& 0\\ 0 & F\end{smallmatrix}\right)\bar V^T}\\
&= &t\cdot \ip{\bar UM\left(\begin{smallmatrix} I_r& 0\\ 0 & R\end{smallmatrix}\right)\bar V^T}{\bar U \left(\begin{smallmatrix} I_r& 0\\ 0 & F\end{smallmatrix}\right)\bar V^T}\\
& = &t\cdot \ip{M\left(\begin{smallmatrix} I_r& 0\\ 0 & R\end{smallmatrix}\right)}{\left(\begin{smallmatrix} I_r& 0\\ 0 & F\end{smallmatrix}\right)}\\
&= &t\cdot \tr\left(\left(\begin{smallmatrix} I_r& 0\\ 0 & RF^T\end{smallmatrix}\right)\cdot  \left(\begin{smallmatrix} A-D& B\\ B^T & C\end{smallmatrix}\right)\right)\\
& = & t\cdot \tr\left( \left(\begin{smallmatrix} A-D& B\\ RF^TB^T & RF^TC\end{smallmatrix}\right)\right)\\
&= & t\cdot\left( \tr(A)-\tr(D)+\tr(RF^TC)  \right)\\
& = & t\cdot\left( \tr(RF^TC)-\tr(C) \right)\\
& \leq  & t\cdot \left( \|RF^T\|_{op}\cdot\|C\|_*-\|C\|_{*}\right)\\
& \leq & 0.
\end{eqnarray*}
Here, the second  identity takes into account that $Z\in \rge \cA^*$ while $\bar UM\left(\begin{smallmatrix} I_r& 0\\ 0 & R\end{smallmatrix}\right)\bar V^T=X\in \ker \cA$. The seventh (last) equality uses the fact that $\tr(A)+\tr (C)=\tr(D)$. The first inequality uses the fact that $C$ is positive semidefinite as well as the `H\"older inequality' for the operator and nuclear norm. The last inequality is due to the fact that $\|R\|_{op}=1$,  $\|F\|_{op}\leq 1$ and the submulitiplicativity of the operator norm.

All in all, we find that  $X=0$ which proves the desired result.
\end{proof}

\noindent
The natural question as to whether \Cref{ass:Suff} is also necessary for uniqueness is answered negatively by the following example.

\begin{example}\label{ex:Counter} Set  $\bE:=\R^{2\times 2}\times \R^{2\times 2}$, and  define $\cA:\R^{2\times 2}\to \bE$ by
\[
\cA(X)=[\left(\begin{smallmatrix}1 & 1 \\ 0 & 0 \end{smallmatrix}\right)X,\; P_{\bA^2}(X)],
\]
where $P_{\bA^2}(X)\coloneq \half (X-X^T)$ is the projection onto the $2\times 2$ skew symmetric matrices $\mathbb A^2$. Set $b:=[\left(\begin{smallmatrix}1 & 0 \\ 0 & 0 \end{smallmatrix}\right), \; \left(\begin{smallmatrix}0 & 0 \\ 0 & 0 \end{smallmatrix}\right)]\in \bE$. Equipped with these choices, consider 
\begin{equation}\label{eq:Ex}
\min_{X\in \R^{2\times 2}} \|X\|_* \st \cA(X)=b.
\end{equation}
The following hold:
\begin{itemize}
\item $\ker \cA=\lin \{\left(\begin{smallmatrix}1 & -1 \\ -1 & 1 \end{smallmatrix}\right)\}$. 
\item $\cA^*:\bE\to \R^{2\times 2}, \quad \cA^*(Y,Z)=\left(\begin{smallmatrix}1 & 0 \\ 1 & 0 \end{smallmatrix}\right)Y+P_{\bA^2}(Z)$.
\item $\rge \cA^*=\set{\left(\begin{smallmatrix}t & s \\ t & s \end{smallmatrix}\right)}{t,s\in \R}+\bA^2$.
\end{itemize}

\noindent
Now, set $\bar X:=\left(\begin{smallmatrix}1 & 0 \\ 0 & 0 \end{smallmatrix}\right)$. Then  $\cA(\bar X)=b$, i.e. $\bar X$ is feasible for \eqref{eq:Ex}. Moreover, by \Cref{prop:SD1},  observe that 
\[
\p \|\cdot\|_*(\bar X)= \set{\left(\begin{smallmatrix}1 & 0 \\ 0 & \beta \end{smallmatrix}\right)}{\beta\in [-1,1]}\, \AND \para (\p \|\cdot\|_*(\bar X))= \set{\left(\begin{smallmatrix}0 & 0 \\ 0 & \beta \end{smallmatrix}\right)}{\beta\in \R}.
\]
It is then an easy exercise to find that 
$
\para(\p \|\cdot\|_*(\bar X))+\rge \cA^*=\R^{2\times 2}.
$
Moreover, we observe that 
\begin{eqnarray*}
0\in \p\|\cdot\|_*(\bar X)+\rge \cA^* & \Longleftrightarrow&   \p\|\cdot\|_*(\bar X)\cap \rge \cA^*\neq \emptyset\\
& \Longleftrightarrow& \exists \beta\in [-1,1], t,s,q\in \R: \; \left(\begin{smallmatrix}1 & 0 \\ 0 & \beta \end{smallmatrix}\right)=\left(\begin{smallmatrix}t & s \\ t & s \end{smallmatrix}\right)+\left(\begin{smallmatrix}0 & q \\ -q & 0 \end{smallmatrix}\right).
 \end{eqnarray*}
The latter system has only one solution  $t=q=1, s=\beta=-1$. In particular, we see that $\bar X$ is a minimizer of \eqref{eq:Ex} and that 
\[
\ri (\p \|\cdot\|_*(\bar X))\cap \rge\cA^*=\emptyset.
\]
In particular, the sufficient condition from \Cref{ass:Suff}  for uniqueness is violated at $\bar X$ (while (ii) is satisfied). In turn, realizing that $\rank \bar X=1$, and  consequently 
\[
W(\bar X)=\set{\left(\begin{smallmatrix}a-d & b \\ b & c \end{smallmatrix}\right)\left(\begin{smallmatrix}1 & 0 \\ 0 &R\end{smallmatrix}\right)}{R^2=1, \left(\begin{smallmatrix}a & b \\ b & c\end{smallmatrix}\right)\in \bS^2_+, \tr\left(\begin{smallmatrix}a-d & b \\ b & c\end{smallmatrix}\right)=0, d>0},
\]
we find that
\begin{eqnarray*}
X\in W(\bar X) \cap \ker \cA & \Longrightarrow & X=\left(\begin{smallmatrix}x& -x \\ -x & x\end{smallmatrix}\right)=\left(\begin{smallmatrix}a-d &\pm b \\  b & \pm c \end{smallmatrix}\right),a-d+c=0.\\
& \Longrightarrow & X=0.
\end{eqnarray*}
Therefore, by  \Cref{cor:Equiv}, $\bar X$ is the unique solution of \eqref{eq:Ex}.
\end{example}

\subsection{Other nuclear norm minimization problems}\label{sec:Other}

\noindent
The following general result affords us to carry over uniqueness results from above to other nuclear norm minimization problems involving a linear operator. The proof relies on {\em Fenchel-Rockafellar duality}  \cite{BoL 00, HUL 01, Roc 70, RoW 98}, and the dual correspondence of {\em strict convexity} and {\em essential smoothness} \cite{Roc 70}. 

\begin{proposition}\label{prop:FR} Let $\cA\in \cL(\bE_1,\bE_2)$,   $g:\bE\to\R$  strictly convex, and  $h:\bE_1\to \rp$ closed, proper convex. Then $\cA$ and $h$ are constant on the solution set
$
\cX^*\coloneqq \argmin_{x\in \bE_1} \left\{g(\cA(x))+h(x)\right\}.
$
\end{proposition}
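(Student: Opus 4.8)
The plan is to give a direct convexity argument, with Fenchel--Rockafellar duality playing at most a decorative role. First I would record the elementary facts: since $g$ is finite-valued and convex, $\cA$ is linear, and $h$ is proper convex, the composite objective $F:=g\circ\cA+h$ is proper and convex, so its set of minimizers $\cX^\ast$ is convex and $F$ is constant on $\cX^\ast$, equal to the optimal value $v^\ast\in\R\cup\{+\infty\}$. If $\cX^\ast=\emptyset$ there is nothing to prove; otherwise pick $x_0,x_1\in\cX^\ast$ and note $v^\ast<+\infty$, so $g(\cA(x_i))$ and $h(x_i)$ are finite for $i=0,1$.

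Next I would fix $\lambda\in[0,1]$ and set $x_\lambda:=(1-\lambda)x_0+\lambda x_1\in\cX^\ast$ (by convexity of $\cX^\ast$). Convexity of $g$ and of $h$ yields
\[
g(\cA(x_\lambda))\le(1-\lambda)g(\cA(x_0))+\lambda g(\cA(x_1)),\qquad h(x_\lambda)\le(1-\lambda)h(x_0)+\lambda h(x_1).
\]
Adding these, the left-hand side sums to $F(x_\lambda)=v^\ast$ while the right-hand side sums to $(1-\lambda)F(x_0)+\lambda F(x_1)=v^\ast$; hence both inequalities are in fact equalities for every $\lambda\in[0,1]$. In particular $g$ is affine on the segment $[\cA(x_0),\cA(x_1)]$, and strict convexity of $g$ forces $\cA(x_0)=\cA(x_1)$ (otherwise evaluating strict convexity at the midpoint contradicts this equality). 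Thus $\cA$ is constant on $\cX^\ast$, and then $h(x_0)=v^\ast-g(\cA(x_0))=v^\ast-g(\cA(x_1))=h(x_1)$, so $h$ is constant on $\cX^\ast$ as well.

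The argument is essentially routine; the one step deserving care is the ``sandwich'' observation that the sum of the two convexity inequalities being squeezed between $v^\ast$ and $v^\ast$ forces each one separately to be an equality — this is precisely what licenses applying strict convexity of $g$ in isolation on the $\cA$-image. An alternative, more structural route closer to the tools emphasized elsewhere in the paper would pass through Fenchel--Rockafellar duality together with the correspondence between strict convexity of $g$ and essential smoothness of $g^\ast$, deducing rigidity of $\cA(x)$ across primal optima from smoothness of the dual objective; but this needs a constraint qualification and some bookkeeping with effective domains and yields nothing extra for the statement as posed, so I would present the direct proof and merely remark on the duality viewpoint when specializing to \Cref{cor:Other}.
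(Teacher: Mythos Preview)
Your proof is correct and takes a genuinely different route from the paper. The paper argues via Fenchel--Rockafellar duality exactly along the lines you anticipate in your final paragraph: since $g$ is finite-valued, strong duality holds; for any dual solution $\bar y$ and any primal solution $x\in\cX^\ast$ the optimality conditions give $\cA(x)\in\p g^\ast(-\bar y)$; strict convexity of $g$ makes $g^\ast$ essentially smooth, so this subdifferential is the singleton $\{\nabla g^\ast(-\bar y)\}$, pinning down $\cA(x)$ independently of $x$. Your direct sandwich argument is more elementary and self-contained: it needs only the definitions of convexity and strict convexity, bypasses the existence of a dual solution, and does not invoke the strict convexity/essential smoothness correspondence or any constraint qualification. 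The duality route, on the other hand, ties the result to the structural themes of the surrounding section and makes visible the primal--dual relation $\cA(x)=\nabla g^\ast(-\bar y)$, which can be useful downstream; but for the bare statement as posed your argument is cleaner.
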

\begin{proof} Clearly, it suffices to prove  that $\cA$ is constant on $\cX^*$. To this end, observe that the dual problem of 
(the primal problem) $
\min_{x\in \bE_1} \left\{g(\cA(x))+h(x)\right\}
$
reads
$
\max_{y\in \bE_2}\{-g^*(-y)-h^*(\cA^*(y))\}.
$
Since $g$ is finite-valued, strong duality holds, and, in particular, for some dual solution $\bar y\in \bE_2$ and any primal solution $x\in \cX^*$ it holds that, in particular, $\cA(x)\in \p g^*(-\bar y)$, cf.,  e.g. \cite[Example 11.41]{RoW 98}. However, since $g$ is strictly convex, $g^*$ is {\em essentially smooth} \cite[Theorem 26.3]{Roc 70} and hence $\cA(x)=\nabla  g^*(-\bar y)$. Since $x\in \cX^*$ was arbitrary, this proves result.
\end{proof}

\begin{corollary}\label{cor:Other}  Let $\cA\in \cL(\R^{n\times p},\bE)$, $b\in \bE$, $\lambda>0$,   $f:\bE\to\R$  strictly convex, and  let  $\bar X$ be a solution of 
\begin{equation}\label{eq:LASSOtype}
\min_{X\in\R^{n\times n}} f(\cA(X)-b)+\lambda\|X\|_*.
\end{equation}
Then $\bar X$ is the unique solution if and only if 
\[
\set{X\in \R^{n\times p}}{\cA(X)=\cA(\bar X),\; \|X\|_*=\|\bar X\|_*}=\{\bar X\}.
\]
(all of which is the case if and only if $W(\bar X)\cap \ker \cA=\{0\}).$
\end{corollary}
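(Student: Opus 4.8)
The plan is to deduce \Cref{cor:Other} from \Cref{prop:FR} together with the uniqueness characterizations already established for \eqref{eq:BP}, namely \Cref{cor:UniMin} (equivalently \Cref{cor:Equiv}). First I would apply \Cref{prop:FR} with $\bE_1 = \R^{n\times p}$ (after the padding device of \Cref{lem:Embed}, or directly in rectangular space), $\bE_2 = \bE$, the linear map $\cA$, the strictly convex outer function $g(\cdot) := f(\cdot - b)$ (strict convexity is preserved under the translation by $-b$), and the closed proper convex function $h := \lambda\|\cdot\|_*$. The conclusion of \Cref{prop:FR} is that both $\cA$ and $h = \lambda\|\cdot\|_*$ are constant on the solution set $\cX^*$ of \eqref{eq:LASSOtype}. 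Since $\lambda > 0$, constancy of $\lambda\|\cdot\|_*$ on $\cX^*$ is the same as constancy of $\|\cdot\|_*$ on $\cX^*$.

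Next I would translate this into the set equality claimed. Let $\bar X \in \cX^*$ be the given solution, so $\cA$ takes the value $\cA(\bar X)$ on all of $\cX^*$ and $\|\cdot\|_*$ takes the value $\|\bar X\|_*$ there; hence
\[
\cX^* \subseteq \set{X \in \R^{n\times p}}{\cA(X) = \cA(\bar X),\ \|X\|_* = \|\bar X\|_*} =: \cX.
\]
Conversely, any $X \in \cX$ is feasible for \eqref{eq:LASSOtype} (since $\cA(X) - b = \cA(\bar X) - b$) and attains the same objective value $f(\cA(\bar X)-b) + \lambda\|\bar X\|_*$ as $\bar X$, so $X \in \cX^*$. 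Therefore $\cX^* = \cX$. In particular $\cX^* = \{\bar X\}$ if and only if $\cX = \{\bar X\}$, which is the asserted "if and only if". The parenthetical equivalence with $W(\bar X) \cap \ker\cA = \{0\}$ is then immediate: $\cX$ is exactly the set figuring in \Cref{cor:Equiv}(i) relative to the base point $\bar X$, so $\cX$ does not contain a proper line segment through $\bar X$ iff $\ker\cA \cap W(\bar X) = \{0\}$ by \Cref{cor:Equiv}; since $\cX$ is convex (intersection of an affine set with a sublevel/level set of the convex function $\|\cdot\|_*$ pinned at its value on an element), $\cX$ contains no proper segment through $\bar X$ iff $\cX = \{\bar X\}$.

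The only genuinely delicate point — and the one I would spell out carefully — is checking that the hypotheses of \Cref{prop:FR} genuinely apply: that strong duality holds (guaranteed since $g$ is finite-valued on all of $\bE$, so the constraint qualification for Fenchel–Rockafellar is automatic, as already invoked in the proof of \Cref{prop:FR}), and that $g(\cdot) = f(\cdot - b)$ is strictly convex, which is clear. One small bookkeeping item is the ambient space: \eqref{eq:LASSOtype} is written over $\R^{n\times n}$ while $\cA \in \cL(\R^{n\times p},\bE)$; I would note that this is harmless — either interpret $\cA$ as acting on the relevant rectangular factor via the padding of \Cref{lem:Embed} (which preserves the nuclear norm and hence solution sets), or simply read $\R^{n\times n}$ as $\R^{n\times p}$ throughout, since $W(\bar X)$ and all the cited corollaries are stated in rectangular generality. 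No step here is hard; the value of the explicit write-up is in making the reduction to \Cref{cor:Equiv}/\Cref{cor:UniMin} transparent.

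\begin{proof}
By \Cref{prop:FR}, applied with the strictly convex function $g := f(\,\cdot\, - b)$ and the closed proper convex function $h := \lambda\|\cdot\|_*$, the linear map $\cA$ and $\lambda\|\cdot\|_*$ are constant on the solution set $\cX^*$ of \eqref{eq:LASSOtype}. Since $\lambda > 0$, the latter means $\|\cdot\|_*$ is constant on $\cX^*$. As $\bar X \in \cX^*$, it follows that $\cX^* \subseteq \cX := \set{X\in \R^{n\times p}}{\cA(X) = \cA(\bar X),\ \|X\|_* = \|\bar X\|_*}$. Conversely, any $X \in \cX$ satisfies $\cA(X) - b = \cA(\bar X) - b$ and $\|X\|_* = \|\bar X\|_*$, hence attains the objective value $f(\cA(\bar X)-b) + \lambda\|\bar X\|_*$ of the minimizer $\bar X$, so $X \in \cX^*$. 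Therefore $\cX^* = \cX$, and in particular $\bar X$ is the unique solution of \eqref{eq:LASSOtype} if and only if $\cX = \{\bar X\}$.

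Finally, $\cX$ is convex, being the intersection of the affine set $\set{X}{\cA(X) = \cA(\bar X)}$ with the set where the convex function $\|\cdot\|_*$ equals its value $\|\bar X\|_*$ at the point $\bar X$; hence $\cX = \{\bar X\}$ if and only if $\cX$ contains no proper line segment through $\bar X$, which by \Cref{cor:Equiv} holds if and only if $\ker \cA \cap W(\bar X) = \{0\}$.
\end{proof}
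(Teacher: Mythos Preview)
Your proof is essentially the paper's own argument, carried out with more detail: apply \Cref{prop:FR} with $g=f(\cdot-b)$ and $h=\lambda\|\cdot\|_*$ to identify the solution set $\cX^*$ with $\cX=\set{X}{\cA(X)=\cA(\bar X),\;\|X\|_*=\|\bar X\|_*}$, then invoke \Cref{cor:Equiv} for the parenthetical. One small slip worth fixing: your stated reason for the convexity of $\cX$ (``intersection of an affine set with the set where the convex function $\|\cdot\|_*$ equals its value $\|\bar X\|_*$'') is not valid, since a level set of a convex function need not be convex; the correct (and immediate) justification is that you have just shown $\cX=\cX^*$, and $\cX^*$ is convex as the $\argmin$ of a convex function.
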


\begin{proof}  Let $\cX=\argmin_{\bR^{n\times p}} \{f(\cA(\cdot)-b)+\lambda\|\cdot\|_*\}$ be the solution set of \eqref{eq:LASSOtype}.  Applying \Cref{prop:FR} to $g:=f((\cdot)-b)$ and $h:=\lambda \|\cdot\|_*$ yields that, in fact,  $\cX=\set{X\in \R^{n\times p}}{\cA(X)=\cA(\bar X),\; \|X\|_*=\|\bar X\|_* }$. Therefore, the claim follows.
\end{proof} 


\section{Final remarks}\label{sec:Final}

\noindent
In this paper, starting from a  study of line segments in the nuclear norm sphere, we established necessary and sufficient conditions for  uniqueness of solutions for  minimizing the nuclear norm over an affine manifold. The central linear-algebraic notion in this regard is {\em simultaneous polarizability}, which formalizes the idea of rotating two (square) matrices in the same fashion to render them positive semidefinite. We then gave another set of sufficient conditions based on the convex geometry of the subdifferential (of the nuclear norm) and its interplay with  (the range of) the ambient linear operator.
A duality-based argument enabled us to transfer these findings to a whole class of nuclear norm-regularized optimization problems with strictly convex fidelity term.

As a topic of future research, we intend to build on this analysis to study stability of nuclear norm(-regularized) optimization problems in terms of the right-hand side $b$ and the regularization parameter $\lambda$. In particular, we would like to study Lipschitz properties of the solution function 
\[
(b,\lambda)\mapsto \argmin_{X\in \R^{n\times p}} \left\{\frac{1}{2}\|\cA(X)-b\|^2+\lambda \|X\|_*\right\}.
\]
This study will rely on a suitable representation of the graph of the subdifferential of the nuclear norm.


\begin{thebibliography}{99}




\bibitem{BoL 00} {\sc J.M.~Borwein and A.S.~Lewis:}
\textit{Convex Analysis and Nonlinear Optimization. Theory and Examples.}
CMS Books in Mathematics, Springer-Verlag, New York, 2000.

\bibitem{CaR 09}{\sc E.J.~Cand\`es, and B.~Recht:}
{\em Exact matrix completion via convex optimization.} 
 Foundations of  Computational Mathematics 9, 2009, pp.~717--772.




\bibitem{CaT 05}{\sc E.J.~Cand\`es, and T.~Tao:}
{\em  Decoding by linear programming.} IEEE Transactions on Information Theory  51(12), 2005, pp.~4203--4215.

\bibitem{Faz 02}{\sc M.~Fazel:}
{\em  Matrix Rank Minimization with Applications.}  Ph.D. thesis, Stanford University,
Stanford, CA, 2002.



\bibitem{FoR 13}{\sc  S.~Foucart and H.~Rauhut:}
{\em A Mathematical Introduction to Compressive Sensing.}  Birkh\"auser, Series on Applied and Numerical Harmonic Analysis,  Springer,  New York, Heidelberg, Dordrecht London, 2013.



\bibitem{Gil 17} {J.C.~Gilbert:}
{\em On the solution uniqueness characterization in the L1 norm and polyhderal gauge recovery.}
Journal of Optimization  Theory and  Applications 172, 2017, pp.~70--101.


\bibitem{HUL 13}{\sc J.-B.~Hiriart-Urruty and H.Y~Len:}
{\em A variational approach of the rank function.}
TOP  21, 2013, pp.~207--240.


\bibitem{HUL 01}{\sc J.-B.~Hiriart-Urrruty and C.~Lemar\'echal:}
{\em Fundamentals of Convex Analysis.}
Grundlehren Tex Editions, Springer, Berlin, Heidelberg, 2001.

\bibitem{HoJ 13}
{\sc R. Horn and C. R. Johnson:} {\em Matrix Analysis.} Cambridge University Press, Cambridge, 2nd Edition, 2013.





\bibitem{Lew 95}{\sc A.S.~Lewis:}{\em The convex analysis of unitarily invariant matrix functions.}
Journal of Convex Analysis 2(1,2), 1995, pp.~173--183.



\bibitem{Lew 96}{\sc A.S.~Lewis:}{\em The convex analysis of hermitian matrices.}
SIAM Journal on Optimization 6(1), 1995, pp.~165--177. 



\bibitem{LeS 05}{\sc A.S.~Lewis and H.S.~Sendov:}
{\em Nonsmooth Analysis of Singular Values.Part I: Theory}
Set-Valued Analysis 13, 2005, pp.~213--241.



\bibitem{Neu 37}{\sc J.~ von Neumann:}
{\em Some matrix inequalities and metrization of matric-space.}
Tomsk University Review 1, 1937, pp.~286--300. In: Collected Works Vol. IV, Pergamon, Oxford, 1962, pp.~205--218.

\bibitem{Nat 95}{\sc B. K. Natarajan:} 
{\em Sparse approximate solutions to linear systems.}
SIAM Journal on  Computing  24, 1995, pp.~227--234.



\bibitem{RFP 10}{\sc B.~Recht, M.~Fazel, and P.A.~Parrilo:}{\em Guaranteed minimum-rank solutions of linear matrix equations via nuclear norm minimization.}
 SIAM Review  52(3),  2010, pp.~471--501.


\bibitem{Roc 70}
{\sc R.T. Rockafellar:} {\em Convex Analysis.} Princeton Mathematical Series, No. 28. Princeton University Press, Princeton, N.J. 1970.




\bibitem{RoW 98}
{\sc R.T.~Rockafellar and R.J.-B.~Wets:} {\em Variational Analysis.}
  Grundlehren der Mathematischen Wissenschaften, Vol.~317, Springer-Verlag, Berlin, 1998.
  
  
  
  
   \bibitem{Sa 94}{\sc E.M.~de~S\'a:}
{\em Exposed faces and duality for symmetric and unitarily invariant norms.}
 Linear Algebra and its Applications 197-198,1994, pp.~429--450.
  
  
 \bibitem{Sa 94.2}{\sc E.M.~de~S\'a:}
 {\em  Faces of the unit ball of a unitarily invariant norm.}
Linear Algebra and its Applications 197-198,  1994, pp.~451--493.



\bibitem{Sti 36}{\sc E.~Stiefel:}
{\em Richtungsfelder und Fernparallelismus in n-dimensionalen Mannigfaltigkeiten.} Commentarii Mathematici Helvetici 8(4), 1935--1936, pp.~305--353.

  
  \bibitem{Wat 92}{\sc G.A.~Watson:}
  {\em Characterization of the subdifferential of some matrix norms.}
  Linear Algebra and its Applications 170, 1992, pp.~33--45.


  \bibitem{Wat 93}{\sc G.A.~Watson:}{\em On matrix approximation problems with Ky Fan k norms.}
  Numerical Algorithms 5, 1993, pp.~263--272.
  
  
  \bibitem{Zie 88}{\sc K.~Zietak:}
  {\em  On the characterization of the extremal points of the unit sphere of matrices.}
Linear Algebra and its Applications  106, 1988, pp.~57--75.
  
  
  \bibitem{Zie 93}{\sc K.~Zietak:}
  {\em Subdifferentials, faces, and dual matrices.}
  Linear Algebra and its Applications 185, 1993, pp.~125--141.
  
  
  
  \bibitem{ZYC 15}{\sc H.~Zhang, W.~Yin, and L.~Cheng:}
{\em Necessary and sufficient conditions of solution uniqueness in 1-norm minimization.}
Journal of Optimization Theory and Applications 164, 2015, pp.~109--122.
  


\end{thebibliography}
\end{document}